\documentclass[a4paper,12pt,reqno]{amsart}

\usepackage[utf8]{inputenc}
\usepackage[english,latin]{babel}
\usepackage{amsfonts}
\usepackage{amsmath}
\usepackage{amssymb}
\usepackage{amsthm}
\usepackage{enumerate}
\usepackage{geometry}

\geometry{centering,footskip=25pt}
\pagestyle{plain}

\newcommand*{\1}{1\!\!\,\mathrm{I}}

\newcommand*{\abs}[1]{\left|#1\right|}
\newcommand*{\norm}[1]{\left\|#1\right\|}

\newcommand*{\Prob}[1]{\mathbf{P} \left\lbrace #1\right\rbrace}
\newcommand*{\E}{\mathbf{E}}

\newcommand*{\qv}[1]{\left\langle #1\right\rangle}
\newcommand*{\jqv}[2]{\left\langle #1,#2\right\rangle}

\newcommand*{\ve}{\varepsilon}
\newcommand*{\vf}{\varphi}
\newcommand*{\mbN}{\mathbb{N}}

\newcommand*{\mbR}{\mathbb{R}}

\newcommand*{\mcB}{\mathcal{B}}
\newcommand*{\mcF}{\mathcal{F}}

\newcommand*{\mcN}{\mathcal{N}}

\theoremstyle{plain}
\newtheorem{theorem}{Theorem}[section]
\newtheorem{lemma}[theorem]{Lemma}

\newtheorem{corollary}[theorem]{Corollary}

\theoremstyle{definition}
\newtheorem{definition}[theorem]{Definition}

\theoremstyle{remark}
\newtheorem{remark}[theorem]{Remark}

\title{The level-crossing intensity for the density of the image of the Lebesgue measure under the action of a Brownian stochastic flow}
\author{V.~V.~Fomichov}
\address{Vladimir~Fomichov: Institute of Mathematics, National Academy of Sciences of Ukraine, Tereshchenkivska str.~3, Kiev~01004, Ukraine}
\email{v-vfom@imath.kiev.ua}
\keywords{Brownian stochastic flows, level-crossing intensity, stationary processes}

\begin{document}
\selectlanguage{english}

\begin{abstract}
In this paper we compute the level-crossing intensity for the density of the image of the Lebesgue measure under the action of a Brownian stochastic flow which is a smooth approximation of the Arratia flow and determine its asymptotic behaviour as the height of the level tends to infinity.
\end{abstract}

\maketitle

\section{Introduction}
\label{section1}

Let us consider the following stochastic integral equation:
\begin{equation}
\label{equation1}
x(u,t)=u+\int\limits_0^t \int\limits_\mbR \vf(x(u,s)-q)\,W(dq,ds),\quad t\geqslant 0,
\end{equation}
where $u\in\mbR$ is a fixed parameter, $W$ is a Wiener sheet on $\mbR \times \mbR_+$, and the function $\vf\colon\mbR \rightarrow \mbR_+$ satisfies the conditions:
\begin{enumerate}
\item[(i)]
$\vf\in C_K^\infty(\mbR,\mbR_+)$, i.~e. $\vf$ is non-negative, infinitely differentiable and has compact support;

\item[(ii)]
$\vf(q)=\vf(-q)$, $q\in\mbR$;

\item[(iii)]
$\int\limits_\mbR \vf^2(q)\,dq=1$.
\end{enumerate}

Under such conditions on the function $\vf$ equation~\eqref{equation1} has a unique strong solution for every $u\in\mbR$. Moreover, for some set $\Omega$ of full probability (without loss of generality we will assume that it is the whole set of outcomes) the mappings
\begin{equation}
\label{equation2}
x(\omega,\cdot,t)\colon\mbR \rightarrow \mbR,\quad t\geqslant 0,\quad \omega\in\Omega,
\end{equation}
are $C^\infty$-diffeomorphisms and the stochastic flow $\{\vf_{s,t}(u),\; u\in\mbR,\; 0\leqslant s\leqslant t<+\infty\}$ given by
\begin{equation}
\label{equation3}
\vf_{s,t}(\omega,u):=x(\omega,x^{-1}(\omega,u,s),t),\quad u\in\mbR,\quad 0\leqslant s\leqslant t<+\infty,\quad \omega\in\Omega,
\end{equation}
where $x^{-1}(\omega,\cdot,s)$ is the inverse of the mapping $x(\omega,\cdot,s)$ (as a rule, we will omit the variable $\omega$), is a Brownian stochastic flow of $C^\infty$-diffeomorphisms (see~\cite{Dorogovtsev2004}, \cite{Dorogovtsev2007}, and also~\cite{Kunita}).

The covariance function of this Brownian stochastic flow is given by
$$
\Phi(z):=\int\limits_\mbR \vf(z+q)\vf(q)\,dq,\quad z\in\mbR.
$$
In other words, for any $u,v\in\mbR$ the joint quadratic variation of the Wiener processes $\{x(u,t),\; t\geqslant 0\}$ and $\{x(v,t),\; t\geqslant 0\}$ has the form
$$
\jqv{x(u,\cdot)}{x(v,\cdot)}_t=\int\limits_0^t \Phi(x(u,s)-x(v,s))\,ds,\quad t\geqslant 0.
$$

Note that the function $\Phi$ takes value $1$ at point $0$ and has compact support, whose diameter is not greater than $2d(\vf)$, where $d(\vf)$ is the diameter of the support of the function $\vf$. Therefore, as $d(\vf)$ tends to zero, the function $\Phi$ converges pointwise to the function
$$
\1_{\{0\}}(z)=
\begin{cases}
1,\quad \text{if $z=0$,}\\
0,\quad \text{if $z\neq 0$.}
\end{cases}
$$

In~\cite{Dorogovtsev2004} it was shown that for any $n\in\mbN$ and for any $u_1,\ldots,u_n\in\mbR$ in the space $C([0;1],\mbR^n)$ the following weak convergence takes place:
$$
(x(u_1,\cdot),\ldots,x(u_n,\cdot))\stackrel{w}{\longrightarrow} (x_0(u_1,\cdot),\ldots,x_0(u_n,\cdot)),\quad d(\vf)\to 0+,
$$
where $\{x_0(u,t),\; u\in\mbR,\; t\geqslant 0\}$ is the Arratia flow, i.~e. a Brownian stochastic flow with covariance function $\1_{\{0\}}$.

Recall that the Arratia flow was constructed in~\cite{Arratia} as the weak limit of families of coalescing simple random walks and that informally it can be described as a flow of Brownian particles in which any two particles move independently until they meet and after that coalesce and move together. Moreover, it is known (see~\cite{Arratia}, \cite{Harris}) that for any $t>0$ and for any interval $I\subset\mbR$ the set $x_0(\mbR,t) \cap I$ is finite almost surely.

Consider the random measures
$$
\lambda_t:=\lambda \circ x^{-1}(\cdot,t),\quad t\geqslant 0,
$$
where $\lambda$ is the one-dimensional Lebesgue measure. Due to the diffeomorphic property of the mappings~\eqref{equation2}, these measures are absolutely continuous. Furthermore, it is easy to check that for the corresponding densities the following representation takes place (strict positivity of the derivative $\frac{\partial x}{\partial u}(u,t)$ is implied by Lemma~\ref{lemma3.1} stated below)
$$
\dfrac{d\lambda_t}{d\lambda}(u)\equiv p_t(u)=\dfrac{1}{\dfrac{\partial x}{\partial u}(x^{-1}(u,t),t)},\quad u\in\mbR,\quad t\geqslant 0.
$$

Now if we set
$$
\lambda_t^0:=\lambda \circ x_0^{-1}(\cdot,t),\quad t\geqslant 0,
$$
it is natural to expect that, as $d(\vf)\to 0+$, the random measures $\lambda_t$ converge in some sense to the random measures $\lambda_t^0$. Therefore, the domains of concentration of the measures $\lambda_t$ or, in other words, the domains in which the densities $p_t$ take great values correspond to the atoms of the measures $\lambda_t^0$ (which obviously coincide with the clusters of the Arratia flow).

It is reasonable to start the study of such high level exceedances of the densities $p_t$ by investigating their level-crossing intensities. Let us recall the corresponding definition.

Let $\{\xi(u),\; u\in\mbR\}$ be a stochastic process, whose trajectories with probability one are not identically equal to a given number $c\in\mbR$ on any interval. Then the number $N([0;1];c)$ of crossings by the random process $\xi$ of the level $c$ on the interval $[0;1]$ is well-defined and the number
$$
\mu(c):=\E N([0;1];c)
$$
is called the intensity of crossings of the level $c$ by the stochastic process $\xi$.

For some classes of stochastic processes $\{\xi(u),\; u\in\mbR\}$ the value of $\mu(c)$ can be computed with the help of the well-known Rice formula (see~\cite{LeadbetterSpaniolo}, where upcrossings are considered, and the references therein). As an example we formulate the following theorem, noting on the way that in this theorem and further $\pi[\eta](\cdot)$ and $\pi[\eta,\zeta](\cdot,\cdot)$ stand for the densities of the distribution of a random variable $\eta$ and of the joint distribution of random variables $\eta$ and $\zeta$ respectively.

\begin{theorem}
\cite{AzaisWschebor}
Let a stochastic process $\{\xi(u),\; u\in\mbR\}$ have continuously differentiable paths and satisfy the following three conditions:
\begin{enumerate}
\item[\textup{(i)}]
the mapping $(u,z)\mapsto\pi[\xi(u)](z)$ is continuous in $u\in [0;1]$ and $z\in U_c$, where $U_c$ is some neighbourhood of $c\in\mbR$;

\item[\textup{(ii)}]
the mapping $(u,z_1,z_2)\mapsto\pi[\xi(u),\xi'(u)](z_1,z_2)$ is continuous in $u\in [0;1]$, $z_1\in U_c$ and $z_2\in\mbR$;

\item[\textup{(iii)}]
$\E w(\xi';\delta)\to 0$ as $\delta\to 0+$, where $w(\xi';\delta)$ is the modulus of continuity of the stochastic process $\xi'$ on the interval $[0;1]$.
\end{enumerate}
Then for any $c\in\mbR$
\begin{equation}
\label{equation4}
\mu(c)=\int\limits_0^1 \int\limits_0^{+\infty} \abs{z} \cdot \pi[\xi(u),\xi'(u)](c,z)\,dz\,du.
\end{equation}
\end{theorem}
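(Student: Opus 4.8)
Since the inner integral in~\eqref{equation4} extends only over $z\in(0,+\infty)$, it registers precisely the \emph{upcrossings} of the level $c$, and I would therefore read $N([0;1];c)$ as the number of upcrossings and prove~\eqref{equation4} for this quantity directly, with no appeal to an up/down symmetry. The plan is to exhibit the number of upcrossings as an almost sure limit of smooth functionals and then to transfer that limit to the expectations. For $\delta>0$ put
\[
I_\delta:=\dfrac{1}{\delta}\int\limits_0^1 \left(\xi'(u)\right)^{+}\1_{[c,c+\delta)}(\xi(u))\,du .
\]
The starting point is the deterministic co-area identity, valid for every $C^1$ path,
\[
\int\limits_0^1 \left(\xi'(u)\right)^{+}\1_{[c,c+\delta)}(\xi(u))\,du=\int\limits_c^{c+\delta} N([0;1];y)\,dy ,
\]
so that $I_\delta=\tfrac1\delta\int_c^{c+\delta} N([0;1];y)\,dy$ is the right-hand average of the upcrossing-count function $y\mapsto N([0;1];y)$. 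This function is locally constant away from the critical values of $\xi$ and from $\xi(0),\xi(1)$; by condition~(i) one has $\Prob{\xi(0)=c}=\Prob{\xi(1)=c}=0$, and a Bulinskaya-type argument (using the local boundedness of the marginal density from~(i) together with the modulus-of-continuity control~(iii)) shows that almost surely $c$ is not a critical value, i.e.\ there is no $u\in[0;1]$ with $\xi(u)=c$ and $\xi'(u)=0$. Hence almost surely $y\mapsto N([0;1];y)$ is continuous at $c$, the crossings are transversal and therefore finite in number, and $I_\delta\to N([0;1];c)$ as $\delta\to0+$.

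Next I would evaluate $\lim_{\delta\to0+}\E I_\delta$ through the densities. By Tonelli's theorem,
\[
\E I_\delta=\dfrac{1}{\delta}\int\limits_0^1\int\limits_c^{c+\delta}\int\limits_0^{+\infty} z\,\pi[\xi(u),\xi'(u)](y,z)\,dz\,dy\,du .
\]
The continuity of the joint density in $(u,y,z)$ supplied by condition~(ii), together with the continuity of the marginal density from~(i), makes the inner average $\tfrac1\delta\int_c^{c+\delta}(\cdots)\,dy$ converge, uniformly in $u\in[0;1]$, to its value at $y=c$; passing to the limit under $\int_0^1 du$ then gives
\[
\lim_{\delta\to0+}\E I_\delta=\int\limits_0^1\int\limits_0^{+\infty} z\,\pi[\xi(u),\xi'(u)](c,z)\,dz\,du ,
\]
which, since $z>0$ throughout, is exactly the right-hand side of~\eqref{equation4}.

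It remains to identify this limit with $\mu(c)$, and this is the step I expect to be the main obstacle. Having the almost sure convergence $I_\delta\to N([0;1];c)$, I must justify $\E I_\delta\to\E N([0;1];c)=\mu(c)$; the approximation is not monotone and no crude domination is available, so the genuine content lies in establishing uniform integrability of the family $(I_\delta)_{\delta>0}$. This is precisely where condition~(iii) is used: the smallness of $\E w(\xi';\delta)$ bounds the probability that, within an interval of length of order $\delta$, the trajectory performs a near-tangential crossing of $c$ or crowds several crossings together---the only configurations able to inflate $I_\delta$---and yields an estimate of the form $\sup_{\delta}\E\!\left[I_\delta\,\1_{\{I_\delta>M\}}\right]\to0$ as $M\to+\infty$. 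Uniform integrability combined with the almost sure convergence gives $\E I_\delta\to\mu(c)$, and equating the two evaluations of $\lim_{\delta\to0+}\E I_\delta$ yields~\eqref{equation4}.
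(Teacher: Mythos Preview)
The paper does not contain a proof of this theorem: it is quoted verbatim from Aza\"{\i}s--Wschebor as background for the Rice/Banach formulae and is not proved in the paper. There is therefore nothing in the paper to compare your argument against.

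On the substance of your sketch: the strategy via the Kac-type functional $I_\delta$ and the co-area identity is the standard one and is essentially how the result is proved in the cited source. Two points deserve care. First, in the passage
\[
\lim_{\delta\to 0+}\E I_\delta
=\int_0^1\int_0^{+\infty} z\,\pi[\xi(u),\xi'(u)](c,z)\,dz\,du,
\]
condition~(ii) gives only pointwise continuity of the joint density; to move the limit through the unbounded $dz$-integral you need an integrable majorant for $z\,\pi[\xi(u),\xi'(u)](y,z)$ uniform over $y$ near $c$ and $u\in[0;1]$, and nothing in hypotheses~(i)--(iii) supplies one directly. In Aza\"{\i}s--Wschebor this is circumvented by not trying to show $\E I_\delta\to\mu(c)$ in one stroke: the lower bound $\mu(c)\geqslant\int_0^1\int_0^{+\infty}(\cdots)$ comes from Fatou's lemma, while the matching upper bound is obtained by a separate discretization/polygonal argument in which~(iii) controls the error. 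Second, your uniform-integrability step is indeed the crux, but as written it is only a heuristic; the actual proof does not establish uniform integrability of $(I_\delta)$ but rather bounds $\E N([0;1];c)$ from above directly. So your plan is morally correct but the decomposition (Fatou for ``$\geqslant$'', discretization for ``$\leqslant$'') is what makes the argument close, and you should replace the uniform-integrability claim by that two-sided estimate.
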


However, the conditions on the stochastic process ensuring the validity of Formula~\eqref{equation4} for \emph{every} level $c\in\mbR$ are often difficult to verify (for instance, in the above theorem such is condition~(iii)), and they were checked only for stochastic processes similar to Gaussian. It is easier to verify the conditions ensuring the validity of similar formulae for \emph{almost every} level $c\in\mbR$.

In this paper we use the following version of Rice's formula (see~\cite[Exercise~3.8]{AzaisWschebor}; cf.~\cite[Formula~(3)]{LeadbetterSpaniolo}) also called Banach's formula.

\begin{theorem}
\label{theorem1.2}
\cite{AzaisWschebor}
Let a stochastic process $\{\xi(u),\; 0\leqslant u\leqslant 1\}$ be such that almost all its trajectories are absolutely continuous, for any $u\in [0;1]$ the distribution of the random variable $\xi(u)$ has a density $\pi[\xi(u)](\cdot)$, and the conditional expectation $\E(\abs{\xi'(u)} \mid \xi(u)=c)$ is well-defined. Then for almost every $c\in\mbR$
$$
\mu(c)=\int\limits_0^1 \E(\abs{\xi'(u)} \mid \xi(u)=c) \cdot \pi[\xi(u)](c)\,du.
$$
\end{theorem}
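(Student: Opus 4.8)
The plan is to derive the formula from the one-dimensional area formula, which in this context is better known as the \emph{Banach indicatrix theorem}: for an absolutely continuous function $f\colon[0;1]\to\mbR$ and any Borel measurable $g\colon\mbR\to\mbR_+$ one has
$$
\int\limits_0^1 g(f(u))\abs{f'(u)}\,du=\int\limits_\mbR g(c)\,N(c)\,dc,
$$
where $N(c):=\#\{u\in[0;1]\colon f(u)=c\}$ is the number of points at which $f$ attains the value $c$. The key observation is that for almost every $c\in\mbR$ this count coincides with the number of crossings $N([0;1];c)$, since the levels at which $f$ merely touches (local-extremum and tangency values) form a Lebesgue-null set and hence affect neither integral.

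First I would apply this identity pathwise to almost every trajectory $\xi(\omega,\cdot)$, which is absolutely continuous by hypothesis, taking $g=\1_B$ for an arbitrary Borel set $B\subseteq\mbR$:
$$
\int\limits_0^1 \1_B(\xi(u))\abs{\xi'(u)}\,du=\int\limits_B N([0;1];c)\,dc \quad \as
$$
Taking expectations of both sides and applying Tonelli's theorem (all integrands are non-negative), the right-hand side becomes $\int_B \mu(c)\,dc$, while the left-hand side becomes $\int_0^1 \E\bigl[\1_B(\xi(u))\abs{\xi'(u)}\bigr]\,du$.

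Next I would disintegrate each inner expectation over the law of $\xi(u)$. Since $\xi(u)$ admits a density $\pi[\xi(u)](\cdot)$ and the conditional expectation $\E(\abs{\xi'(u)}\mid\xi(u)=c)$ is well-defined, the tower property gives
$$
\E\bigl[\1_B(\xi(u))\abs{\xi'(u)}\bigr]=\int\limits_B \E(\abs{\xi'(u)}\mid\xi(u)=c)\cdot\pi[\xi(u)](c)\,dc.
$$
Substituting this and exchanging the order of integration (again by Tonelli) yields, for every Borel set $B\subseteq\mbR$,
$$
\int\limits_B \mu(c)\,dc=\int\limits_B\left[\int\limits_0^1 \E(\abs{\xi'(u)}\mid\xi(u)=c)\cdot\pi[\xi(u)](c)\,du\right]dc.
$$
Because this equality of integrals holds over an arbitrary Borel set, the two integrands must agree for Lebesgue-almost every $c\in\mbR$, which is precisely the claimed formula.

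The routine technical points are the joint measurability in $(\omega,c)$ of the crossing count $N([0;1];c)$, needed to justify the applications of Tonelli's theorem, and the existence of a regular version of the conditional expectation used in the disintegration; both follow from standard facts once the density $\pi[\xi(u)](\cdot)$ is in hand. I expect the genuine obstacle to lie in the identification of the Banach indicatrix with the level-crossing count $N([0;1];c)$: one must argue that the set of ``bad'' levels $c$ — those at which the trajectory has a tangency or attains a local extremum — is negligible, and it is exactly this step that forces the conclusion to hold only for almost every $c$ rather than for every $c$.
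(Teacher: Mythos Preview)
The paper does not supply its own proof of this theorem: it is quoted from \cite{AzaisWschebor} (as Exercise~3.8 there) and merely used as a tool, so there is nothing to compare your argument against line by line. That said, the paper does remark that this result is ``also called Banach's formula'', which points exactly to the approach you have taken via the Banach indicatrix / one-dimensional area formula, and your outline is correct: apply the area formula pathwise to get $\int_0^1 \1_B(\xi(u))\abs{\xi'(u)}\,du=\int_B N(c)\,dc$, take expectations with Tonelli, disintegrate the left side over the law of $\xi(u)$, and conclude by arbitrariness of $B$. Your identification of the one genuine subtlety --- that the Banach indicatrix counts preimage points while $N([0;1];c)$ counts crossings, and the two agree only off a Lebesgue-null set of levels (the critical values, handled by the Luzin $N$-property of absolutely continuous functions) --- is exactly what forces the ``almost every $c$'' in the conclusion.
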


The main aim of this paper is to verify that the level-crossing intensity $\mu_t(c)$ (with $c>0$) of the stochastic process $\{p_t(u),\; u\in\mbR\}$ is well-defined for any $t>0$, and to prove that
\begin{equation}
\label{equation5}
\mu_t(c)=\overline{\mu}_t(c)\qquad \text{for a.~e. $c>0$,}
\end{equation}
where
$$
\overline{\mu}_t(c)=\dfrac{\sqrt{2L''} \cdot e^{\frac{\pi^2}{2L't}-\frac{L't}{8}}}{\pi L'\sqrt{\pi t}} \cdot \dfrac{1}{\sqrt{c}} \cdot \int\limits_0^{+\infty} \dfrac{e^{-\frac{v^2}{2L't}}\sinh v\sin\frac{\pi v}{L't}}{\sqrt{1+\frac{2\cosh v}{c}+\frac{1}{c^2}}}\,dv
$$
with constants
\begin{align*}
L'&:=\int\limits_\mbR \vf'^2(q)\,dq>0\\
\intertext{and}
L''&:=\int\limits_\mbR \vf''^2(q)\,dq>0,
\end{align*}
and also to establish the asymptotic equality
\begin{equation}
\label{equation6}
\overline{\mu}_t(c)=\dfrac{e^{-\frac{L't}{8}}\sqrt{L''}}{\pi\sqrt{2L'}} \cdot \sqrt{\dfrac{c}{\ln c}} \cdot \exp\left[-\frac{(\ln c)^2}{2L't}\right] \cdot (1+\overline{o}(1)),\quad c\to +\infty.
\end{equation}

The structure of the main part of this paper is as follows. In Section~\ref{section2} we prove the stationarity of the stochastic process $\{x(u,t)-u,\; u\in\mbR\}$ and $\theta$-homogeneity (see Definition~\ref{definition2.5} below) of the stochastic process $\{(p_t(u),p'_t(u)),\; u\in\mbR\}$ (here the usual derivative is considered), in Section~\ref{section3} we find the density of the joint distribution of the random variables $p_t(u)$ and $p'_t(u)$, and in Section~\ref{section4} we establish equalities~\eqref{equation5} and~\eqref{equation6}.

\section{Stationarity and $\theta$-homogeneity}
\label{section2}

To prove the stationarity (for fixed $t$) of the stochastic process $\{x(u,t)-u,\; u\in\mbR\}$ we will use the version of the Euler--Maruyama discrete approximation scheme presented in~\cite{Nishchenko}. For $t>0$ and $n\geqslant 1$ set
\begin{gather*}
x_0^n(u,t)\equiv u,\quad u\in\mbR,\\
x_{k+1}^n(u,t):=x_k^n(u,t)+\xi_{k+1}^n(x_k^n(u,t),t),\quad u\in\mbR,\quad 0\leqslant k\leqslant n-1,
\end{gather*}
where
$$
\xi_{k+1}^n(u,t):=\int\limits_{\frac{kt}{n}}^{\frac{(k+1)t}{n}} \int\limits_\mathbb{R} \varphi(u-q)\,W(dq,ds),\quad u\in\mathbb{R},\quad 0\leqslant k\leqslant n-1.
$$
By Theorem~4 of paper~\cite{Nishchenko} the stochastic process $\{x_n^n(u,1),\; u\in\mbR\}$ defined recurrently in this way approximates the stochastic process $\{x(u,1),\; u\in\mbR\}$ in the uniform metric on the interval $[0;1]$. However, the presented proof of this theorem can be easily extended \emph{\foreignlanguage{latin}{mutatis mutandis}} to the case of arbitrary time $t>0$ and interval $[a;b]$, and so its corresponding generalization can be formulated in the following (slightly simplified, but sufficient for our purposes) form.

\begin{theorem}
\label{theorem2.1}
For any time $t>0$ and interval $[a;b]\subset\mbR$ there exists a constant $C>0$ depending on the time $t$, the length of the interval $[a;b]$ and the function $\vf$, such that for all $n\geqslant 1$
$$
\E\norm{x_n^n(\cdot,t)-x(\cdot,t)}_{[a;b]}\leqslant\dfrac{C}{\sqrt{n}},
$$
where $\norm{\cdot}_{[a;b]}$ is the supremum-norm on the interval $[a;b]$.
\end{theorem}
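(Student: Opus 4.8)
The plan is to carry out the classical strong-convergence analysis of the Euler--Maruyama scheme and then to promote the resulting pointwise-in-$u$ estimate to one that is uniform over $u\in[a;b]$. Fix $t>0$, write $\Delta:=t/n$ for the mesh and $t_k:=kt/n$ for the grid points, and introduce the error process
$$
e_k^n(u):=x_k^n(u,t)-x(u,t_k),\quad u\in\mbR,\quad 0\leqslant k\leqslant n,
$$
so that the quantity to be bounded is $\E\norm{e_n^n(\cdot)}_{[a;b]}$. The tools I would use are: the global Lipschitz estimate $\int_\mbR[\vf(\alpha-q)-\vf(\beta-q)]^2\,dq\leqslant L'(\alpha-\beta)^2$ (which follows from the Cauchy--Schwarz inequality and is where the constant $L'=\int_\mbR\vf'^2\,dq$ enters), the Itô isometry and the Burkholder--Davis--Gundy inequality for the Wiener-sheet integrals, and the discrete Gronwall lemma.

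First I would derive the error recursion. Subtracting the integral equation~\eqref{equation1} at time $t_{k+1}$ from the defining recurrence for $x_{k+1}^n(u,t)$ yields $e_{k+1}^n(u)=e_k^n(u)+M_k(u)$, where
$$
M_k(u):=\int\limits_{t_k}^{t_{k+1}}\int\limits_\mbR\bigl[\vf(x_k^n(u,t)-q)-\vf(x(u,s)-q)\bigr]\,W(dq,ds)
$$
is a stochastic-integral increment over $[t_k;t_{k+1}]$ and hence is orthogonal to the $\mcF_{t_k}$-measurable random variable $e_k^n(u)$; therefore $\E|e_{k+1}^n(u)|^2=\E|e_k^n(u)|^2+\E|M_k(u)|^2$. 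Using the Itô isometry, the Lipschitz estimate above, the splitting $|x_k^n(u,t)-x(u,s)|\leqslant|e_k^n(u)|+|x(u,t_k)-x(u,s)|$, and the fact that $x(u,\cdot)$ is a martingale with $\qv{x(u,\cdot)}_s=s$ (so $\E|x(u,s)-x(u,t_k)|^2=s-t_k\leqslant\Delta$), I would obtain $\E|M_k(u)|^2\leqslant 2L'\Delta\,\E|e_k^n(u)|^2+L'\Delta^2$. The recursion $\E|e_{k+1}^n(u)|^2\leqslant(1+2L'\Delta)\E|e_k^n(u)|^2+L'\Delta^2$, started from $e_0^n\equiv0$, then closes by discrete Gronwall to give the \emph{pointwise} bound $\sup_{u}\E|e_n^n(u)|^2\leqslant\tfrac{\Delta}{2}(e^{2L't}-1)=O(1/n)$.

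The main obstacle is upgrading this to the uniform bound, because the supremum over $u\in[a;b]$ ranges over a continuum. I would first observe that the same recursion, run with the $2p$-th moment and with the Burkholder--Davis--Gundy inequality in place of the Itô isometry, yields the higher-moment pointwise bound $\sup_u\E|e_n^n(u)|^{2p}\leqslant C_p\,n^{-p}$ for every $p\in\mbN$. Then I would estimate the spatial increments of the error by running the analogous recursion for $e_k^n(u)-e_k^n(v)$: here one expands the difference of the two integrands of $M_k$ using the smoothness of $\vf$ (so that second differences are controlled through $\vf'$ and $\vf''$), and one uses the spatial Lipschitz regularity of the flow $x(\cdot,s)$ and of its Euler approximation $x_k^n(\cdot,t)$, whose initial-condition derivatives have finite moments uniformly in $k,n$. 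This should produce an increment estimate of Kolmogorov type,
$$
\E\abs{e_n^n(u)-e_n^n(v)}^{2p}\leqslant C_p\,\abs{u-v}^{1+\beta}\,n^{-p},\quad u,v\in[a;b],
$$
for some $\beta>0$ and all sufficiently large $p$, with $C_p$ depending on $t$, $b-a$ and $\vf$ but not on $n$. The Garsia--Rodemich--Rumsey inequality (equivalently, the quantitative Kolmogorov continuity theorem) then turns this, together with the pointwise bound at a single reference point, into $\E\sup_{u\in[a;b]}|e_n^n(u)|^{2p}\leqslant C\,n^{-p}$, and taking the $2p$-th root gives $\E\norm{e_n^n(\cdot)}_{[a;b]}\leqslant C\,n^{-1/2}$, as required.

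The points requiring care are purely bookkeeping: keeping every constant (the Gronwall factor $e^{2L't}$, the moment constants $C_p$, and the Garsia--Rodemich--Rumsey constant, which absorbs the interval length through a power of $b-a$) independent of $n$, and checking that the Euler approximation inherits the spatial regularity of the flow uniformly in $k$ and $n$ so that the increment estimate closes. Because all of these are immediate consequences of the $C_K^\infty$ assumptions on $\vf$, the generalization of~\cite[Theorem~4]{Nishchenko} from time $1$ and the interval $[0;1]$ to arbitrary $t>0$ and $[a;b]$ indeed goes through \emph{\foreignlanguage{latin}{mutatis mutandis}}; the only substantive addition to the finite-dimensional Euler--Maruyama theory is the passage to the supremum over~$u$, performed uniformly in $n$.
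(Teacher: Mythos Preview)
The paper does not prove Theorem~\ref{theorem2.1}: it cites \cite[Theorem~4]{Nishchenko} for the case $t=1$, $[a;b]=[0;1]$ and simply states that the argument there extends \emph{\foreignlanguage{latin}{mutatis mutandis}}. Your proposal therefore goes beyond the paper by sketching an actual argument, and there is no proof in the paper to compare it against.

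Your sketch is sound. The pointwise $L^{2p}$ bound via discrete Gronwall is standard; the point you correctly flag as the main obstacle is the increment estimate $\E\abs{e_n^n(u)-e_n^n(v)}^{2p}\leqslant C_p\abs{u-v}^{1+\beta}n^{-p}$ \emph{with} the factor $n^{-p}$, since without it the Garsia--Rodemich--Rumsey step would yield only a sub-optimal uniform rate. This estimate does hold (indeed with the stronger exponent $\abs{u-v}^{2p}$): running the recursion for $e_k^n(u)-e_k^n(v)$ and bounding the second difference of $\vf$ as you indicate, the source terms in the Gronwall inequality are all of order $\abs{u-v}^{2p}n^{-p}$ --- one coming from the time increment of the flow difference $x(u,\cdot)-x(v,\cdot)$, and one from the bilinear cross term after Cauchy--Schwarz together with the already-established pointwise bound $\E\abs{e_k^n}^{4p}\leqslant Cn^{-2p}$ and the uniform-in-$n$ spatial Lipschitz estimate for the Euler scheme --- so the recursion closes at the claimed rate.
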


\begin{theorem}
\label{theorem2.2}
For any $t\geqslant 0$ the stochastic process $\{x(u,t)-u,\; u\in\mbR\}$ is (strictly) stationary.
\end{theorem}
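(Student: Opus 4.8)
The plan is to prove stationarity of $\{x(u,t)-u,\;u\in\mbR\}$ by exploiting the spatial homogeneity of the driving Wiener sheet $W$ together with the translation-invariant structure of equation~\eqref{equation1}, and then to transfer this property through the discrete approximation scheme supplied by Theorem~\ref{theorem2.1}. The key observation is that the coefficient $\vf(x-q)$ in~\eqref{equation1} depends on $x$ and $q$ only through their difference, so a spatial shift of the initial point should be compensated by a corresponding shift in the integration variable $q$. Concretely, I would first establish the result at the level of the finite approximations $x_n^n$, where everything reduces to manipulations with finitely many Gaussian increments, and then pass to the limit $n\to\infty$ using the uniform $L^1$-convergence from Theorem~\ref{theorem2.1}, which preserves equality of finite-dimensional distributions.

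The central step is to show that for every fixed shift $h\in\mbR$ the process $\{x(u+h,t)-(u+h),\;u\in\mbR\}$ has the same finite-dimensional distributions as $\{x(u,t)-u,\;u\in\mbR\}$. At the discrete level, I would introduce the shifted sheet $W^h(dq,ds):=W(dq-h,ds)$, which, by the spatial stationarity of white noise, has the same distribution as $W$. Writing out the recursion, the increments satisfy
\begin{equation*}
\xi_{k+1}^n(u+h,t)=\int\limits_{\frac{kt}{n}}^{\frac{(k+1)t}{n}}\int\limits_\mbR \vf(u+h-q)\,W(dq,ds)=\int\limits_{\frac{kt}{n}}^{\frac{(k+1)t}{n}}\int\limits_\mbR \vf(u-q')\,W^h(dq',ds),
\end{equation*}
after the change of variable $q'=q-h$. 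An induction on $k$ then shows that $x_k^n(u+h,t)-(u+h)$ equals the same functional of $W^h$ that $x_k^n(u,t)-u$ is of $W$; since $W^h\stackrel{d}{=}W$, the two displaced processes agree in distribution for every $n$.

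Having the identity in law for each approximation, I would invoke Theorem~\ref{theorem2.1}: the bound $\E\norm{x_n^n(\cdot,t)-x(\cdot,t)}_{[a;b]}\leqslant C/\sqrt{n}$ gives convergence in probability (hence in distribution) of the finite-dimensional vectors $(x_n^n(u_1,t)-u_1,\ldots,x_n^n(u_m,t)-u_m)$ to those of the true process, uniformly enough that the shifted and unshifted limits must coincide. Passing to the limit in the equality of distributions of the approximations yields that $\{x(u+h,t)-(u+h)\}$ and $\{x(u,t)-u\}$ share all finite-dimensional distributions, which is precisely strict stationarity. The case $t=0$ is trivial since $x(u,0)=u$.

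I expect the main obstacle to be the careful bookkeeping in the inductive step: one must verify that the dependence of $x_k^n(u+h,t)$ on the shifted sheet is structurally identical to that of $x_k^n(u,t)$ on the original sheet at every stage of the recursion, keeping track of how the shift propagates through the nonlinear composition $x_{k+1}^n=x_k^n+\xi_{k+1}^n(x_k^n)$. The subtlety is that the argument of $\xi_{k+1}^n$ is itself the shifted running value, so the change of variables must be applied consistently with the induction hypothesis rather than naively. Once this is organized correctly, the convergence argument is routine given Theorem~\ref{theorem2.1}.
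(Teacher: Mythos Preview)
Your proposal is correct and follows essentially the same architecture as the paper's proof: establish stationarity of the discrete approximations $\{x_k^n(u,t)-u,\;u\in\mbR\}$ by induction on $k$, and then pass to the limit $n\to\infty$ via Theorem~\ref{theorem2.1} to obtain stationarity of $\{x(u,t)-u,\;u\in\mbR\}$. The only difference is in the execution of the inductive step: the paper argues with characteristic functions, using that $\xi_{k+1}^n$ is independent of $\mcF_{kt/n}$ and that the Gaussian field $u\mapsto\xi_{k+1}^n(u,t)$ is itself stationary, whereas you establish the pathwise identity $x_k^n(u+h,t;W)-(u+h)=x_k^n(u,t;W^h)-u$ and then invoke $W^h\stackrel{d}{=}W$; this is a minor and legitimate variant that buys a cleaner bookkeeping (no conditioning or transforms), at the cost of having to make sense of the shifted sheet $W^h$ as an object.
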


\begin{proof}
Fix arbitrary $t>0$ (in the case of $t=0$ there is nothing to prove).

With the help of the principle of mathematical induction, using characteristic functions and taking into account that $\xi_{k+1}^n$ is independent of $\mcF_{\frac{kt}{n}}$ for every $k\in\{0,\ldots,n-1\}$, where $(\mcF_s,\; s\geqslant 0)$ is the filtration generated by the Wiener sheet $W$ (and completed with the sets of zero probability), one can show that all stochastic processes $\{x_k^n(u,t)-u,\; u\in\mbR\}$, $0\leqslant k\leqslant n$, are stationary.

To prove the stationarity of the stochastic process $\{x(u,t)-u,\; u\in\mbR\}$ note that from Theorem~\ref{theorem2.1} it follows that for any $u_1,\ldots,u_m\in\mbR$
$$
\E\max_{1\leqslant k\leqslant m} \abs{x_n^n(u_k,t)-x(u_k,t)}\rightarrow 0,\quad n\to\infty,
$$
and so
\begin{gather*}
(x_n^n(u_1,t)-u_1,\ldots,x_n^n(u_m,t)-u_m)\stackrel{w}{\longrightarrow}\\
\stackrel{w}{\longrightarrow} (x(u_1,t)-u_1,\ldots,x(u_m,t)-u_m),\quad n\to\infty.
\end{gather*}
Therefore, for arbitrary $h>0$ and any Borel sets $\Delta_1,\ldots,\Delta_m\in \mcB(\mbR)$ we have
\begin{gather*}
\Prob{[x(u_1+h,t)-(u_1+h)]\in\Delta_1,\ldots,[x(u_m+h,t)-(u_m+h)]\in\Delta_m}=\\
=\lim_{n\to\infty} \Prob{[x_n^n(u_1+h,t)-(u_1+h)]\in\Delta_1,\ldots, [x_n^n(u_m+h,t)-(u_m+h)]\in\Delta_m}=\\
=\lim_{n\to\infty} \Prob{[x_n^n(u_1,t)-u_1]\in\Delta_1,\ldots, [x_n^n(u_m,t)-u_m]\in\Delta_m}=\\
=\Prob{[x(u_1,t)-u_1]\in\Delta_1,\ldots,[x(u_m,t)-u_m]\in\Delta_m}.
\end{gather*}
Thus, the distributions of the random vectors
\begin{gather*}
(x(u_1+h,t)-(u_1+h),\ldots,x(u_m+h,t)-(u_m+h))\\
\intertext{and}
(x(u_1,t)-u_1,\ldots,x(u_m,t)-u_m)
\end{gather*}
coincide. The theorem is proved.
\end{proof}

\begin{corollary}
\label{corollary2.3}
For any $t\geqslant 0$ the three-dimensional stochastic process
$$
\left\lbrace\left(x(u,t)-u,\dfrac{\partial x}{\partial u}(u,t),\dfrac{\partial^2 x}{\partial u^2}(u,t)\right),\; u\in\mbR\right\rbrace
$$
is stationary.
\end{corollary}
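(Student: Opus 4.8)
The plan is to realise both derivatives as almost-sure pointwise limits of difference quotients of the already-known stationary process $y(u):=x(u,t)-u$, and then to transport the shift-invariance from Theorem~\ref{theorem2.2} through the limit. The starting remark is that, since for $\omega\in\Omega$ the map $x(\omega,\cdot,t)$ is a $C^\infty$-diffeomorphism, and in particular twice continuously differentiable, for every fixed $u\in\mbR$ we have almost surely, as $\ve\to 0+$,
$$
\frac{x(u+\ve,t)-x(u-\ve,t)}{2\ve}\longrightarrow\frac{\partial x}{\partial u}(u,t)
\quad\text{and}\quad
\frac{x(u+\ve,t)-2x(u,t)+x(u-\ve,t)}{\ve^2}\longrightarrow\frac{\partial^2 x}{\partial u^2}(u,t).
$$

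The key point that makes the reduction work is that, after the deterministic increments cancel, both difference quotients become affine functionals of the values of $y$ alone; namely,
$$
\frac{x(u+\ve,t)-x(u-\ve,t)}{2\ve}=\frac{y(u+\ve)-y(u-\ve)}{2\ve}+1,\qquad
\frac{x(u+\ve,t)-2x(u,t)+x(u-\ve,t)}{\ve^2}=\frac{y(u+\ve)-2y(u)+y(u-\ve)}{\ve^2},
$$
because $(u+\ve)-(u-\ve)=2\ve$ and $(u+\ve)-2u+(u-\ve)=0$. Accordingly, for fixed points $u_1,\dots,u_m\in\mbR$ I would introduce the three-dimensional approximating vectors
$$
W_\ve(u):=\left(x(u,t)-u,\ \frac{x(u+\ve,t)-x(u-\ve,t)}{2\ve},\ \frac{x(u+\ve,t)-2x(u,t)+x(u-\ve,t)}{\ve^2}\right),
$$
each of which is the image of the triple $(y(u-\ve),y(u),y(u+\ve))$ under one and the same deterministic affine map. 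Hence the joint distribution of $(W_\ve(u_1),\dots,W_\ve(u_m))$ is the pushforward, under a fixed measurable map, of the joint distribution of $y$ at the $3m$ points $u_j-\ve$, $u_j$, $u_j+\ve$ for $1\le j\le m$.

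Since by Theorem~\ref{theorem2.2} this joint distribution of $y$ is invariant under the simultaneous shift of all its arguments by an arbitrary $h>0$, it follows that for every $\ve>0$ the law of $(W_\ve(u_1),\dots,W_\ve(u_m))$ coincides with that of $(W_\ve(u_1+h),\dots,W_\ve(u_m+h))$. The almost-sure convergence recorded above holds jointly for the finitely many points involved, and therefore
$$
(W_\ve(u_1),\dots,W_\ve(u_m))\stackrel{w}{\longrightarrow}
\left(\left(x(u_j,t)-u_j,\frac{\partial x}{\partial u}(u_j,t),\frac{\partial^2 x}{\partial u^2}(u_j,t)\right)\right)_{j=1}^m,\quad \ve\to 0+,
$$
and the same for the shifted points. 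Passing to the limit $\ve\to 0+$ on both sides of the equality of laws and invoking the uniqueness of the weak limit yields the coincidence of the finite-dimensional distributions of the process at $u_1,\dots,u_m$ and at $u_1+h,\dots,u_m+h$, which is exactly the asserted stationarity. The argument is essentially routine; the only step requiring a moment's care is the interchange of the spatial shift with the limit in $\ve$, and this causes no difficulty, since the shift merely relabels the finite index set while the convergence takes place coordinatewise almost surely.
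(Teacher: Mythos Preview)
Your proof is correct and supplies precisely the routine argument the paper omits: in the paper Corollary~\ref{corollary2.3} is stated without proof as an immediate consequence of Theorem~\ref{theorem2.2}, the implicit observation being that $\partial_u x=\partial_u y+1$ and $\partial_u^2 x=\partial_u^2 y$, so all three components are deterministic (affine) functionals of the stationary process $y=x(\cdot,t)-\cdot$ and its almost-surely existing derivatives. Your difference-quotient-plus-weak-limit device is the standard way to make this passage rigorous, and there is nothing to add.
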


Now we will need several definitions (see~\cite[Chapter~5]{Zirbel}).

\begin{definition}
\label{definition2.4}
A family of mappings $\{\theta_z\colon\Omega \rightarrow \Omega,\; z\in\mbR\}$ is called a \emph{family of (spatial) shifts} if
\begin{enumerate}
\item[(i)]
the mapping $\mbR \times \Omega\ni (z,\omega) \mapsto \theta_z(\omega)\in \Omega$ is measurable;

\item[(ii)]
$\theta_y \circ \theta_z=\theta_{y+z}$ for any $y,z\in\mbR$;

\item[(iii)]
$\theta_0$ is the identity mapping;

\item[(iv)]
$\mathbf{P} \circ \theta_z^{-1}=\mathbf{P}$ for any $z\in\mbR$.
\end{enumerate}
\end{definition}

\begin{definition}
\label{definition2.5}
A stochastic process $\{\xi(u),\; u\in\mbR\}$ is called a \emph{$\theta$-homogeneous stochastic process} if
$$
\forall\, \omega\in\Omega\quad \forall\, u,z\in\mbR:\quad \xi(\theta_z\omega,u)=\xi(\omega,u+z).
$$
\end{definition}

\begin{definition}
A mapping $G\colon\Omega \times \mbR \rightarrow \mbR$ is called a \emph{$\theta$-homogeneous random transformation} if
$$
\forall\, \omega\in\Omega\quad \forall\, u,z\in\mbR:\quad G(\theta_z\omega,u)=G(\omega,u+z)-z.
$$
\end{definition}

\begin{definition}
A mapping $A\colon\Omega \times \mbR \times \mbR_+ \rightarrow E$ taking values in some measurable space $(E,\mathcal{E})$ is called a \emph{$\theta$-homogeneous random field} if it is measurable and
$$
\forall\, \omega\in\Omega\quad \forall\, u,z\in\mbR\quad \forall\, t\geqslant 0:\quad A(\theta_z\omega,u,t)=A(\omega,u+z,t).
$$
\end{definition}

\begin{definition}
A stochastic flow $\{\vf_{s,t}(u),\; u\in\mbR,\; 0\leqslant s\leqslant t<\infty\}$ is called a \emph{$\theta$-homogeneous stochastic flow} if
$$
\forall\, \omega\in\Omega\quad \forall\, u\in\mbR\quad \forall\, s,t\geqslant 0,\; s\leqslant t:\quad \vf_{s,t}(\theta_z\omega,u)=\vf_{s,t}(\omega,u+z)-z.
$$
\end{definition}

\begin{lemma}
\label{lemma2.9}
For any $t\geqslant 0$ the stochastic processes
\begin{gather*}
\left\lbrace\left(x(u,t)-u,\dfrac{\partial x}{\partial u}(u,t),\dfrac{\partial^2 x}{\partial u^2}(u,t)\right),\; u\in\mbR\right\rbrace
\intertext{and}
\{(p_t(u),p'_t(u)),\; u\in\mbR\}
\end{gather*}
are $\theta$-homogeneous.
\end{lemma}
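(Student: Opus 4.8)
The plan is to reduce both statements to a single core identity for the flow itself and then obtain everything else by differentiation and by the density formula.

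First I would fix the family of shifts $\{\theta_z,\; z\in\mbR\}$ concretely as the spatial translations of the driving noise: for each $z$ let $\theta_z$ be the transformation of $\Omega$ under which the Wiener sheet is translated by $z$ in its spatial argument, so that $\int_\mbR f(q)\,W(\theta_z\omega,dq,ds)=\int_\mbR f(q-z)\,W(\omega,dq,ds)$ for every test function $f$. Since the law of the Wiener sheet is invariant under spatial translations, this family satisfies conditions~(i)--(iv) of Definition~\ref{definition2.4}; in particular every $\theta_z$ preserves $\mathbf{P}$.

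Next I would prove the key transformation rule
$$
x(\theta_z\omega,u,t)=x(\omega,u+z,t)-z,\qquad u,z\in\mbR,\; t\geqslant 0,
$$
i.e. that for each fixed $t$ the map $u\mapsto x(\omega,u,t)$ is a $\theta$-homogeneous random transformation (equivalently, that the flow~\eqref{equation3} is $\theta$-homogeneous). To see this, substitute the definition of $\theta_z$ into equation~\eqref{equation1} written for $x(\theta_z\omega,u,\cdot)$: the spatial shift of $W$ turns the integrand $\vf(x(\theta_z\omega,u,s)-q)$ into $\vf((x(\theta_z\omega,u,s)+z)-q)$ integrated against $W(\omega,\cdot,\cdot)$. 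Hence $y(u,\cdot):=x(\theta_z\omega,u,\cdot)+z$ satisfies equation~\eqref{equation1} driven by $W(\omega,\cdot,\cdot)$ with initial value $u+z$, which is exactly the equation solved by $x(\omega,u+z,\cdot)$. The strong uniqueness guaranteed under conditions~(i)--(iii) on $\vf$ then forces $y(u,\cdot)=x(\omega,u+z,\cdot)$, which is the claimed identity.

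With the core identity in hand the rest is calculus. Differentiating it once and twice in $u$ (the derivatives exist for every $\omega\in\Omega$ by the diffeomorphic property~\eqref{equation2}) gives
$$
\frac{\partial x}{\partial u}(\theta_z\omega,u,t)=\frac{\partial x}{\partial u}(\omega,u+z,t),\qquad \frac{\partial^2 x}{\partial u^2}(\theta_z\omega,u,t)=\frac{\partial^2 x}{\partial u^2}(\omega,u+z,t),
$$
while the first coordinate is handled by $x(\theta_z\omega,u,t)-u=x(\omega,u+z,t)-(u+z)$; together these yield the $\theta$-homogeneity of the three-dimensional process in the sense of Definition~\ref{definition2.5}. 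For the second process I would first deduce the matching rule for the inverse, $x^{-1}(\theta_z\omega,u,t)=x^{-1}(\omega,u+z,t)-z$, by applying $x(\theta_z\omega,\cdot,t)$ to both sides and using the core identity; substituting this together with the transformation rule for $\frac{\partial x}{\partial u}$ into the density formula $p_t(u)=1/\frac{\partial x}{\partial u}(x^{-1}(u,t),t)$ collapses the two shifts and gives $p_t(\theta_z\omega,u)=p_t(\omega,u+z)$, and one further differentiation in $u$ gives $p_t'(\theta_z\omega,u)=p_t'(\omega,u+z)$.

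The main obstacle I anticipate is the passage from an almost-sure to an everywhere statement that Definition~\ref{definition2.5} demands: strong uniqueness of~\eqref{equation1} is an a.s. fact, whereas $\theta$-homogeneity must hold for \emph{every} $\omega$. Reconciling the two requires care in setting up each $\theta_z$ as an everywhere-defined, measurable, measure-preserving map and in choosing the full-probability set $\Omega$ to be simultaneously $\theta_z$-invariant for all $z$ and to carry a jointly measurable flow version of the solution on which the uniqueness identity holds pointwise. Once that bookkeeping is arranged, everything above is routine.
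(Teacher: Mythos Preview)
Your argument is correct but follows a genuinely different route from the paper's. The paper does \emph{not} define the shifts on the original probability space carrying the Wiener sheet. Instead it passes to the canonical path space
\[
\Omega=\{\omega=(\omega_1,\omega_2,\omega_3)\mid \omega_1\in C^2(\mbR),\ \omega_1'>-1,\ \omega_2=\omega_1'+1,\ \omega_3=\omega_1''\},
\]
identifies the three-dimensional process with the coordinate process, and takes $\theta_z$ to be the standard shift $(\theta_z\omega)(u)=\omega(u+z)$. On this space the identity $\xi(\theta_z\omega,u)=\xi(\omega,u+z)$ is a tautology for \emph{every} $\omega$, so the ``a.s.\ versus everywhere'' obstacle you flag simply does not arise; the only nontrivial condition of Definition~\ref{definition2.4} is measure-preservation~(iv), and that is exactly the stationarity of Corollary~\ref{corollary2.3}, proved earlier via the Euler--Maruyama approximation. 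By contrast, you keep the Wiener-sheet space, define $\theta_z$ as a spatial translation of the noise, get measure-preservation for free from the translation-invariance of the sheet, and then deduce the covariance property of $x$ from strong uniqueness of~\eqref{equation1}. Your approach is more self-contained---it would make Theorem~\ref{theorem2.2} and Corollary~\ref{corollary2.3} corollaries rather than prerequisites---but the price is precisely the null-set bookkeeping you identify: arranging a jointly measurable, $\theta_z$-invariant full-probability set on which the uniqueness identity holds pointwise. The paper's canonical-space trick trades that bookkeeping for the discrete approximation argument.
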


\begin{proof}
For the proof it is enough to note that the canonical representation of the stochastic process
$$
\left\lbrace\left(x(u,t)-u,\dfrac{\partial x}{\partial u}(u,t),\dfrac{\partial^2 x}{\partial u^2}(u,t)\right),\; u\in\mbR\right\rbrace
$$
on the space
$$
\Omega=\{\omega=(\omega_1,\omega_2,\omega_3) \mid \omega_1\in C^2(\mathbb{R}),\; \omega_1'>-1,\; \omega_2=\omega_1'+1,\; \omega_3=\omega_1''\}
$$
with the Borel $\sigma$-algebra generated by the uniform metric is $\theta$-homogeneous with respect to the standard spatial shifts (for these shifts conditions~(ii) and~(iii) of Definition~\ref{definition2.4} are obviously satisfied, condition~(i) follows from their continuity, and condition~(iv) follows from Corollary~\ref{corollary2.3}). After this one can directly verify the $\theta$-homogeneity of the stochastic process $\{x^{-1}(u,t)-u,\; u\in\mbR\}$ and then that of the stochastic process $\{(p_t(u),p'_t(u)),\; u\in\mbR\}$.
\end{proof}

The main results of the third section of this paper are based on the following theorem.

\begin{theorem}
\label{theorem2.10}
\cite[Chapter~5, Theorem~4.11]{Zirbel}
Let $\{\vf_{s,t}\colon\mbR \rightarrow \mbR,\; 0\leqslant s\leqslant t<+\infty\}$ be a $\theta$-homogeneous stochastic flow of $C^1$-diffeomorphisms and $A\colon\Omega \times \mbR \times \mbR_+ \rightarrow E$ be a $\theta$-homogeneous random field taking values in some measurable space $(E,\mathcal{E})$. Then for any $s,t\geqslant 0$, $s\leqslant t$, the following propositions hold true:
\begin{enumerate}
\item[\textup{(i)}]
the random transformation $\vf_{s,t}^{-1}(\cdot)$ is a $\theta$-homogeneous diffeomorphism;

\item[\textup{(ii)}]
the stochastic process $\frac{\partial \vf_{s,t}}{\partial u}(\cdot)$ is $\theta$-homogeneous;

\item[\textup{(iii)}]
the stochastic process $\frac{\partial \vf_{s,t}}{\partial u}(\vf_{s,t}^{-1}(\cdot))$ is $\theta$-homogeneous;

\item[\textup{(iv)}]
for any $\mathcal{E}$-measurable function $f\colon E \rightarrow \mbR_+$ and any $u\in\mbR$
$$
\E f(A(\vf_{s,t}(u),t))=\E\left[f(A(0,t)) \cdot \dfrac{1}{\dfrac{\partial \vf_{s,t}}{\partial u}(\vf_{s,t}^{-1}(0))}\right],
$$
provided the expectation on the right-hand side is well-defined and finite.
\end{enumerate}
\end{theorem}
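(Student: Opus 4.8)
The plan is to treat (i)--(iii) as essentially formal consequences of the defining relations and to reserve the real work for the averaging identity (iv). Throughout I write $\psi:=\vf_{s,t}^{-1}$ for brevity.

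For (i) I would argue from the defining property of the inverse. Fixing $\omega,z,u$ and setting $v:=\psi(\theta_z\omega,u)$, so that $\vf_{s,t}(\theta_z\omega,v)=u$, the $\theta$-homogeneity of the flow gives $\vf_{s,t}(\theta_z\omega,v)=\vf_{s,t}(\omega,v+z)-z$; hence $\vf_{s,t}(\omega,v+z)=u+z$, i.e.\ $v+z=\psi(\omega,u+z)$, which is precisely $\psi(\theta_z\omega,u)=\psi(\omega,u+z)-z$, the relation required of a $\theta$-homogeneous transformation. For (ii) I would differentiate the flow identity $\vf_{s,t}(\theta_z\omega,u)=\vf_{s,t}(\omega,u+z)-z$ in $u$ to get $\frac{\partial\vf_{s,t}}{\partial u}(\theta_z\omega,u)=\frac{\partial\vf_{s,t}}{\partial u}(\omega,u+z)$. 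For (iii) I would combine the two: evaluating $\frac{\partial\vf_{s,t}}{\partial u}$ at the point $\psi(\theta_z\omega,u)=\psi(\omega,u+z)-z$ in the environment $\theta_z\omega$ and applying (ii) with this shifted argument absorbs both the $-z$ in the argument and the environment shift, yielding $\frac{\partial\vf_{s,t}}{\partial u}(\omega,\psi(\omega,u+z))$, which is the value of the composed process at $u+z$.

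The heart of the matter is (iv), where the plan is a change-of-variables argument glued to the measure preservation $\mathbf P\circ\theta_z^{-1}=\mathbf P$. First I would note that the integrand $G(\omega,u):=f(A(\vf_{s,t}(\omega,u),t))$ is itself $\theta$-homogeneous in $u$ (using the homogeneity of both $A$ and the flow), so that $\E G(\omega,u)$ is independent of $u$ and equals the left-hand side. Thus, for any probability density $\rho$ on $\mbR$, the left-hand side equals $\E\int_\mbR\rho(u)\,f(A(\vf_{s,t}(\omega,u),t))\,du$. Next I would perform the pathwise substitution $v=\vf_{s,t}(\omega,u)$, legitimate because $\vf_{s,t}(\omega,\cdot)$ is a $C^1$-diffeomorphism of $\mbR$, rewriting this as $\E\int_\mbR\rho(\psi(\omega,v))\,f(A(\omega,v,t))\,\frac{\partial\psi}{\partial v}(\omega,v)\,dv$.

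The final and most delicate step collapses this back to the origin. For each fixed $v$ I would invoke the invariance $\E\Phi=\E(\Phi\circ\theta_{-v})$ together with the homogeneity relations for $A$, for $\psi$ (part (i)), and for $\frac{\partial\psi}{\partial v}$ to evaluate the integrand of $\E[\,\cdot\,]$ at $\theta_{-v}\omega$: this sends $A(\omega,v,t)\mapsto A(\omega,0,t)$, $\frac{\partial\psi}{\partial v}(\omega,v)\mapsto\frac{\partial\psi}{\partial v}(\omega,0)$, and $\rho(\psi(\omega,v))\mapsto\rho(\psi(\omega,0)+v)$. A Tonelli interchange (legitimate since $f\geqslant 0$, with finiteness guaranteed by the standing hypothesis on the right-hand side) then lets the integral over $v$ act only on $\rho(\psi(\omega,0)+v)$, which integrates to $1$, leaving $\E\left[f(A(0,t))\,\frac{\partial\psi}{\partial v}(\omega,0)\right]$; the inverse-function identity $\frac{\partial\psi}{\partial v}(\omega,0)=1/\frac{\partial\vf_{s,t}}{\partial u}(\vf_{s,t}^{-1}(0))$ finishes the proof. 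I expect the main obstacle to be exactly this manoeuvre: the naive substitution produces random integration limits $\vf_{s,t}(\omega,0)$ and $\vf_{s,t}(\omega,\ell)$, and the device of averaging against $\rho$ over the \emph{whole} line (rather than over a finite window) is precisely what removes them and makes the fibrewise shift-invariance applicable, so the care must be spent on the Tonelli step and on the well-definedness hypothesis.
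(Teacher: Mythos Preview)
The paper does not prove this theorem; it is quoted from Zirbel's thesis \cite[Chapter~5, Theorem~4.11]{Zirbel} and used as a black box, so there is no in-paper argument to compare your proposal against.

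That said, your proposal is a correct and standard proof. The verifications of (i)--(iii) are exactly the expected one-line manipulations of the defining identities. For (iv), your scheme --- average the starting point $u$ against an auxiliary probability density $\rho$ on $\mbR$, change variables pathwise via the diffeomorphism $v=\vf_{s,t}(\omega,u)$, apply the $\theta$-invariance of $\mathbf{P}$ fibrewise in $v$ to relocate everything to the origin, and then integrate $\rho$ away --- is precisely how such Lagrangian-to-Eulerian transfer identities are established for spatially homogeneous flows. The two Tonelli interchanges are legitimate because $f\geqslant 0$, and the finiteness hypothesis on the right-hand side is only needed at the very end to conclude that the common value is finite.

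Two points I would make explicit in a clean write-up. First, the global change of variables uses that $\vf_{s,t}(\omega,\cdot)$ is a bijection of $\mbR$ onto $\mbR$ with strictly positive derivative; for a flow of $C^1$-diffeomorphisms of $\mbR$ with $\vf_{s,s}=\mathrm{id}$ this is automatic, and it is what makes the formula in (iv) appear without an absolute value on the Jacobian. Second, the Tonelli step needs joint $(\omega,v)$-measurability of the integrand, which follows from the measurability clauses in Definitions~\ref{definition2.4}--\ref{definition2.5} and the measurability of the random field $A$. Neither is a gap, just bookkeeping.
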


\section{The joint distribution of $p_t(u)$ and $p'_t(u)$}
\label{section3}

In this section we show that the joint distribution of the random variables $p_t(u)$ and $p'_t(u)$ has a density and find its form.

To begin with, we prove the following lemma.

\begin{lemma}
\label{lemma3.1}
The following representation takes place:
\begin{equation}
\label{equation7}
\dfrac{\partial x}{\partial u}(u,t)=\exp\left[-\frac{1}{2}L't+\int\limits_0^t \int\limits_\mbR \vf'(x(u,s)-q)\,W(dq,ds)\right],\quad t\geqslant 0,\quad u\in\mbR.
\end{equation}
Moreover, for any $u\in\mbR$ the stochastic process
\begin{equation}
\label{equation8}
w_u(t):=\dfrac{1}{\sqrt{L'}} \int\limits_0^t \int\limits_\mbR \vf'(x(u,s)-q)\,W(dq,ds),\quad t\geqslant 0,
\end{equation}
is a standard Wiener process.
\end{lemma}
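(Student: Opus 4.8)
The plan is to differentiate the defining equation~\eqref{equation1} with respect to the spatial variable $u$ and to recognize the resulting linear stochastic equation as one whose solution is a stochastic exponential. Writing $y(u,t):=\frac{\partial x}{\partial u}(u,t)$ and differentiating~\eqref{equation1} under the stochastic integral, one is led to the variational equation
$$
y(u,t)=1+\int\limits_0^t \int\limits_\mbR \vf'(x(u,s)-q)\,y(u,s)\,W(dq,ds),\quad t\geqslant 0,
$$
which is of the form $dy=y\,dM$ with initial condition $y(u,0)=1$, where
$$
M(t):=\int\limits_0^t \int\limits_\mbR \vf'(x(u,s)-q)\,W(dq,ds)
$$
is, for each fixed $u$, a continuous martingale with $M(0)=0$.

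The next step would be to compute the quadratic variation of $M$. By the quadratic-variation formula for Itô integrals against the Wiener sheet together with the substitution $r=x(u,s)-q$ in the inner integral,
$$
\qv{M}_t=\int\limits_0^t \int\limits_\mbR \vf'^2(x(u,s)-q)\,dq\,ds=\int\limits_0^t L'\,ds=L't,
$$
since $\int_\mbR \vf'^2(r)\,dr=L'$ independently of $s$. One then sets $Z(t):=\exp\bigl[M(t)-\frac{1}{2}\qv{M}_t\bigr]=\exp\bigl[M(t)-\frac{1}{2}L't\bigr]$ and applies Itô's formula to check that $dZ=Z\,dM$ with $Z(0)=1$. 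As $Z$ and $y$ then solve the same linear equation with the same initial datum, uniqueness forces $y(u,t)=Z(t)$, which is precisely~\eqref{equation7}.

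For the second assertion, I would observe that $w_u(t)=\frac{1}{\sqrt{L'}}M(t)$ is a continuous local martingale with $w_u(0)=0$, and that from the computation above
$$
\qv{w_u}_t=\dfrac{1}{L'}\qv{M}_t=\dfrac{1}{L'}\cdot L't=t.
$$
By Lévy's characterization theorem, a continuous local martingale starting at the origin whose quadratic variation equals $t$ is a standard Wiener process, which establishes the claim for~\eqref{equation8}.

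The main obstacle I anticipate is the rigorous justification of differentiating~\eqref{equation1} under the stochastic integral sign to obtain the variational equation for $y$, that is, knowing that the flow is differentiable in $u$ and that the spatial derivative may be interchanged with the Itô integral. This is a standard but delicate fact from the theory of stochastic flows (see~\cite{Kunita}), and it is exactly here that the $C^\infty$-regularity of the maps~\eqref{equation2} and the smoothness and compact support of $\vf$ are used.
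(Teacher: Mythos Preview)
Your proposal is correct and follows essentially the same approach as the paper: differentiate~\eqref{equation1} in $u$ (citing~\cite{Kunita}) to obtain the linear variational equation, compute the quadratic variation $\qv{M}_t=L't$, verify via It\^{o}'s formula that the stochastic exponential solves this equation, invoke uniqueness, and apply L\'evy's characterization for $w_u$. The only cosmetic difference is that the paper establishes the Wiener-process claim first and the representation second, whereas you reverse the order.
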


\begin{proof}
The properties of the integral with respect to a Wiener sheet imply that the stochastic process $\{w_u(t),\; t\geqslant 0\}$ defined by~\eqref{equation8} is a continuous $(\mcF_t)$-adapted martingale with the quadratic variation
$$
\qv{w_u}_t=\dfrac{1}{L'} \int\limits_0^t \int\limits_\mbR \vf'^2(x(u,s)-q)\,dq\,ds=t,\quad t\geqslant 0,
$$
and so, by L\'evy's characterizing theorem it is a standard Brownian motion.

Furthermore, differentiating both sides of the equation for $x(u,t)$ (see~\cite[Theorem~3.3.3]{Kunita}), we obtain that
\begin{equation}
\label{equation9}
\dfrac{\partial x}{\partial u}(u,t)=1+\int\limits_0^t \int\limits_\mbR \vf'(x(u,s)-q) \dfrac{\partial x}{\partial u}(u,s)\,W(dq,ds),\quad t\geqslant 0.
\end{equation}
Using It\^{o}'s formula, it is easy to show that the stochastic process defined by the right-hand side of~\eqref{equation7} satisfies the stochastic integral equation~\eqref{equation9} (with respect to the unknown stochastic process $\{\frac{\partial x}{\partial u}(u,t),\; t\geqslant 0\}$). Therefore, equation~\eqref{equation7} is now implied by the uniqueness of the strong solution of this equation.
\end{proof}

\begin{corollary}
\label{corollary3.2}
For any $t>0$ and $u\in\mbR$ the distribution of the random variable $\frac{\partial x}{\partial u}(u,t)$ has a density of the form
$$
\pi\left[\dfrac{\partial x}{\partial u}(u,t)\right](z)=\dfrac{1}{\sqrt{2\pi L't}} \cdot \exp\left[-\dfrac{\left(\ln z+\frac{3}{2}L't\right)^2}{2L't}+L't\right],\quad z>0.
$$
\end{corollary}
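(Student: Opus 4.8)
The plan is to read off the distribution directly from the representation established in Lemma~\ref{lemma3.1}, which already contains all the analytic content of the corollary. Combining equation~\eqref{equation7} with the definition~\eqref{equation8} of the process $w_u$, we may rewrite the derivative as
$$
\dfrac{\partial x}{\partial u}(u,t)=\exp\left[-\frac{1}{2}L't+\sqrt{L'}\,w_u(t)\right].
$$
Since Lemma~\ref{lemma3.1} guarantees that $\{w_u(t),\; t\geqslant 0\}$ is a standard Wiener process, the random variable $w_u(t)$ is Gaussian with mean $0$ and variance $t$. Hence $\ln\frac{\partial x}{\partial u}(u,t)=-\frac{1}{2}L't+\sqrt{L'}\,w_u(t)$ is Gaussian with mean $-\frac{1}{2}L't$ and variance $L't$; in other words, $\frac{\partial x}{\partial u}(u,t)$ is a lognormal random variable. (This also re-confirms the strict positivity of the derivative that was used earlier.)

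First I would apply the standard formula for the density of a lognormal variable: if $\ln Y\sim\mathcal{N}(\mu,\sigma^2)$, then $Y$ admits the density $\frac{1}{z\sigma\sqrt{2\pi}}\exp[-(\ln z-\mu)^2/(2\sigma^2)]$ for $z>0$. Substituting $\mu=-\frac{1}{2}L't$ and $\sigma^2=L't$ gives
$$
\pi\left[\dfrac{\partial x}{\partial u}(u,t)\right](z)=\dfrac{1}{z\sqrt{2\pi L't}}\exp\left[-\dfrac{\left(\ln z+\frac{1}{2}L't\right)^2}{2L't}\right],\quad z>0.
$$

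It then remains only to bring this expression to the stated form, which is a purely algebraic matter. Writing $\frac{1}{z}=e^{-\ln z}$ and absorbing this factor into the exponential, one completes the square so as to incorporate the linear term $-\ln z$ back into the Gaussian exponent; explicitly,
$$
-\ln z-\dfrac{\left(\ln z+\frac{1}{2}L't\right)^2}{2L't}=-\dfrac{\left(\ln z+\frac{3}{2}L't\right)^2}{2L't}+L't,
$$
which reproduces exactly the claimed density. I do not anticipate any genuine obstacle: the substance of the statement is already supplied by Lemma~\ref{lemma3.1}, and the only remaining tasks are the identification of the lognormal density and the elementary completion of the square displayed above.
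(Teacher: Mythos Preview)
Your proposal is correct and is exactly the intended argument: the paper states the result as a corollary of Lemma~\ref{lemma3.1} without any proof, so the natural deduction you give --- identifying $\frac{\partial x}{\partial u}(u,t)=\exp[-\tfrac{1}{2}L't+\sqrt{L'}\,w_u(t)]$ as lognormal and then completing the square --- is precisely what the paper leaves to the reader. Your algebraic verification of the identity $-\ln z-\frac{(\ln z+\frac{1}{2}L't)^2}{2L't}=-\frac{(\ln z+\frac{3}{2}L't)^2}{2L't}+L't$ is also correct.
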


\begin{theorem}
\label{theorem3.3}
For any $t>0$ and $u\in\mbR$ the distribution of the random variable $p_t(u)$ has a density of the form
\begin{equation}
\label{equation10}
\pi[p_t(u)](z)=\dfrac{1}{\sqrt{2\pi L't}} \cdot \exp\left[-\dfrac{\left(\ln z+ \frac{3}{2}L't\right)^2}{2L't}+L't\right],\quad z>0.
\end{equation}
\end{theorem}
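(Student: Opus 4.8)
The plan is to obtain the density of $p_t(u)$ from the already known density of $\frac{\partial x}{\partial u}(u,t)$ (Corollary~\ref{corollary3.2}) by applying Theorem~\ref{theorem2.10}(iv) to a suitably chosen $\theta$-homogeneous field. First I record two elementary identities. Since $x(\cdot,0)$ is the identity mapping, formula~\eqref{equation3} gives $\vf_{0,t}=x(\cdot,t)$, and hence $\frac{\partial\vf_{0,t}}{\partial u}(\cdot)=\frac{\partial x}{\partial u}(\cdot,t)$ and $\vf_{0,t}^{-1}=x^{-1}(\cdot,t)$. Plugging these into the definition of $p_t$ yields
\[
p_t(\vf_{0,t}(u))=\frac{1}{\frac{\partial x}{\partial u}(u,t)}
\qquad\text{and}\qquad
\frac{1}{\frac{\partial\vf_{0,t}}{\partial u}(\vf_{0,t}^{-1}(0))}=p_t(0).
\]
The first identity is the key point: precomposing $p_t$ with the flow $\vf_{0,t}$ removes the inverse map and replaces the reciprocal of the derivative at $x^{-1}(u,t)$ by the reciprocal of the derivative at $u$.

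Next I apply Theorem~\ref{theorem2.10}(iv) with $s=0$, the $\theta$-homogeneous flow $\vf_{0,t}$, and the field $A(\omega,v,t):=p_t(\omega,v)$, which is $\theta$-homogeneous by Lemma~\ref{lemma2.9}. For a fixed nonnegative bounded measurable function $g$ I choose the test function $f(z):=g(z)/z$; this is legitimate because $p_t>0$ almost surely by Lemma~\ref{lemma3.1}, and it guarantees finiteness of the right-hand side, since $f(p_t(0))\cdot p_t(0)=g(p_t(0))$ is bounded. Substituting the two identities above, the assertion of Theorem~\ref{theorem2.10}(iv) becomes
\[
\E\left[g\!\left(\frac{1}{\frac{\partial x}{\partial u}(u,t)}\right)\cdot\frac{\partial x}{\partial u}(u,t)\right]=\E\, g(p_t(0)).
\]
Writing $X:=\frac{\partial x}{\partial u}(u,t)$ and inserting its density $\pi_X$ from Corollary~\ref{corollary3.2}, the left-hand side equals $\int_0^{+\infty}g(1/x)\,x\,\pi_X(x)\,dx$, and the substitution $z=1/x$ turns this into $\int_0^{+\infty}g(z)\,z^{-3}\pi_X(1/z)\,dz$.

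Since $g$ ranges over all nonnegative bounded measurable functions, this identifies the distribution of $p_t(0)$ as absolutely continuous with density $\pi[p_t(0)](z)=z^{-3}\pi_X(1/z)$ for $z>0$. By the $\theta$-homogeneity of $\{(p_t(u),p'_t(u)),\,u\in\mbR\}$ (Lemma~\ref{lemma2.9}) together with the measure-preserving property of the shifts (Definition~\ref{definition2.4}(iv)), the distribution of $p_t(u)$ does not depend on $u$, so the same density holds for every $u$. It then remains to insert the explicit form of $\pi_X$ and simplify: replacing the argument by $1/z$ flips the sign of $\ln z$ inside the square, while the prefactor $z^{-3}$ contributes $-3\ln z$ to the exponent; expanding the quadratics in $\ln z$ and collecting terms shows the exponent coincides with the one in~\eqref{equation10}. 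The main obstacle is not this final rearrangement but the correct bookkeeping in the first two steps — recognizing which field to feed into Theorem~\ref{theorem2.10}(iv) and choosing $f(z)=g(z)/z$ so that the Jacobian weight $\frac{1}{\frac{\partial\vf_{0,t}}{\partial u}(\vf_{0,t}^{-1}(0))}=p_t(0)$ cancels and produces the reciprocal relation above; once this is set up correctly, the rest is a routine change of variables.
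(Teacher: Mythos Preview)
Your proof is correct and follows essentially the same route as the paper: apply Theorem~\ref{theorem2.10}(iv) to the $\theta$-homogeneous field $A=p_{t}$ with test function $f(z)=g(z)/z$ (the paper takes $g=\1_{\Delta}$), use the identity $p_t(\vf_{0,t}(u))=1/\tfrac{\partial x}{\partial u}(u,t)$ to reduce to Corollary~\ref{corollary3.2}, and change variables. The only cosmetic differences are that you work with general bounded $g$ rather than indicators and that you make the $u$-independence explicit via Lemma~\ref{lemma2.9}, whereas the paper leaves this implicit.
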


\begin{proof}
Set
\begin{gather*}
E:=(0;+\infty),\\
\mathcal{E}:=\mcB((0;+\infty)),
\end{gather*}
and, for fixed $t_0>0$,
$$
A(u,t):=p_{t_0}(u),\quad u\in\mbR,\quad t\geqslant 0.
$$
Note that the mapping $(\omega,u,t)\mapsto A(\omega,u,t)$ is measurable, since such is the mapping $(\omega,u,t)\mapsto x(\omega,u,t)$.

Therefore, for the stochastic flow~\eqref{equation3} and the function
$$
f(z):=\dfrac{1}{z} \cdot \1\{z\in\Delta\},\quad z\in E,
$$
where the set $\Delta\in\mathcal{E}$ is arbitrary, by Theorem~\ref{theorem2.10} with $s=0$ and $t=t_0$ we have (for convenience, until the end of the proof we write $t$ instead of $t_0$)
$$
\E\left[\dfrac{1}{p_t(x(u,t))} \cdot \1\left\lbrace p_t(x(u,t))\in\Delta\right\rbrace\right]=\E\left[\dfrac{1}{p_t(0)} \cdot \1\left\lbrace p_t(0)\in\Delta\right\rbrace \cdot p_t(0)\right],
$$
so that
\begin{equation}
\label{equation11}
\Prob{p_t(0)\in\Delta}=\E\left[\dfrac{\partial x}{\partial u}(u,t) \cdot \1\left\lbrace\dfrac{1}{\frac{\partial x}{\partial u}(u,t)}\in\Delta\right\rbrace\right].
\end{equation}
However, from Corollary~\ref{corollary3.2} it follows that
\begin{equation}
\label{equation12}
\begin{split}
&\E\left[\dfrac{\partial x}{\partial u}(u,t) \cdot \1\left\lbrace\dfrac{1}{\frac{\partial x}{\partial u}(u,t)}\in\Delta\right\rbrace\right]=\\
=&\int\limits_\Delta \dfrac{1}{\sqrt{2\pi L't}} \cdot \exp\left[-\frac{\left(\ln z+ \frac{3}{2}L't\right)^2}{2L't}+L't\right]\,dz.
\end{split}
\end{equation}
Since the set $\Delta$ is arbitrary, equalities~\eqref{equation11} and~\eqref{equation12} imply the required assertion.
\end{proof}

\begin{remark}
Thus, for any $t>0$ and $u\in\mbR$ the following equality in distribution holds:
$$
p_t(u)\stackrel{d}{=}\dfrac{\partial x}{\partial u}(u,t).
$$
\end{remark}

\begin{corollary}
For any $t>0$ and $u\in\mbR$ the following equality holds:
$$
\Prob{p_t(u)>c}=\dfrac{e^{-\frac{L't}{8}}\sqrt{L't}}{\sqrt{2\pi}} \cdot \dfrac{1}{\sqrt{c} \cdot \ln c} \cdot \exp\left[-\frac{(\ln c)^2}{2L't}\right] \cdot (1+\overline{o}(1)),\quad c\to +\infty.
$$
\end{corollary}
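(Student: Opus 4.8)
The plan is to read off the tail probability directly from the explicit density~\eqref{equation10} obtained in Theorem~\ref{theorem3.3} and then reduce the computation to the classical asymptotics of a Gaussian tail. First I would write
$$
\Prob{p_t(u)>c}=\int\limits_c^{+\infty}\dfrac{1}{\sqrt{2\pi L't}}\exp\left[-\dfrac{\left(\ln z+\frac{3}{2}L't\right)^2}{2L't}+L't\right]\,dz
$$
and carry out the substitution $y=\ln z$, $dz=e^y\,dy$, which changes the lower limit to $\ln c$ and inserts the extra factor $e^y$ into the integrand. Expanding the square $\left(y+\frac{3}{2}L't\right)^2$ and combining it with the terms $L't+y$ coming from the exponential weight and the Jacobian, the whole exponent collapses to $-\frac{y^2}{2L't}-\frac{y}{2}-\frac{L't}{8}$, so that the tail becomes a pure Gaussian integral.

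Next I would complete the square in the remaining quadratic, writing $-\frac{y^2}{2L't}-\frac{y}{2}=-\frac{1}{2L't}\left(y+\frac{L't}{2}\right)^2+\frac{L't}{8}$; the constant $+\frac{L't}{8}$ exactly cancels the stray $-\frac{L't}{8}$, leaving the density of a normal law with mean $-\frac{L't}{2}$ and variance $L't$. After the affine substitution $w=\left(y+\frac{L't}{2}\right)/\sqrt{L't}$ this identifies the tail as
$$
\Prob{p_t(u)>c}=1-\Phi(w_0),\qquad w_0=\dfrac{\ln c+\frac{L't}{2}}{\sqrt{L't}},
$$
where $\Phi$ is the standard normal distribution function.

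Finally I would invoke the classical Mills-ratio asymptotics $1-\Phi(x)=\frac{1}{x\sqrt{2\pi}}e^{-x^2/2}(1+\overline{o}(1))$ as $x\to+\infty$ (alternatively obtained on the spot by a single integration by parts). Expanding $\frac{w_0^2}{2}=\frac{(\ln c)^2}{2L't}+\frac{\ln c}{2}+\frac{L't}{8}$ gives $e^{-w_0^2/2}=e^{-L't/8}\,c^{-1/2}\,\exp\!\left[-\frac{(\ln c)^2}{2L't}\right]$, while $\frac{1}{w_0}=\frac{\sqrt{L't}}{\ln c}(1+\overline{o}(1))$; collecting these factors yields precisely the claimed asymptotic equality. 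I do not expect any genuine difficulty here: the argument is entirely elementary, and the only point requiring a little care is checking that the error from the Mills ratio and the error from replacing $w_0$ by $\ln c/\sqrt{L't}$ can be absorbed into a single factor $(1+\overline{o}(1))$, which is immediate since both are $\overline{o}(1)$ and the leading factor is bounded.
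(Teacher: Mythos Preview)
Your argument is correct and follows exactly the approach outlined in the paper: write the tail as an integral of the density~\eqref{equation10}, change variables to reduce it to a standard Gaussian tail, and apply the Mills-ratio asymptotics. The paper's proof is merely a sketch of these same three steps, so your version is simply a more detailed execution of the identical strategy.
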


\begin{proof}
To prove this equality one should re-write its left-hand side in the form of an integral of the density of the distribution of the random variable $p_t(u)$ from Formula~\eqref{equation10}, to reduce it by the change-of-variable formula to an integral of the density $\mathfrak{p}$ of the standard Gaussian distribution, and finally to use the well-known (e.~g., see~\cite[Chapter~1]{Lifshits}) relation
\[
\int\limits_c^{+\infty} \mathfrak{p}(u)\,du=\dfrac{1}{c} \cdot \mathfrak{p}(c) \cdot (1+\overline{o}(1)),\quad c\to +\infty.
\qedhere
\]
\end{proof}

To prove the next result we will need an auxiliary lemma concerning the conditional expectation of the It\^{o} integral with respect to a Wiener process.

\begin{lemma}
\label{lemma3.6}
Let a continuous stochastic process $\{\xi_t,\; t\geqslant 0\}$ be independent of a Wiener process $\{\beta_t,\; t\geqslant 0\}$ and the It\^{o} integral
$$
\int\limits_0^t \xi_s\,d\beta_s,\quad t\geqslant 0,
$$
be well-defined. Then for any Borel set $\Delta\subset\mbR$ the following equality holds:
\begin{gather*}
\E\left(\left.\1\left\lbrace\int\limits_0^t \xi_s\,d\beta_s\in\Delta\right\rbrace \right|\xi\right)=\\
=\int\limits_\Delta \dfrac{1}{\sqrt{2\pi}\sigma_t} e^{-\frac{v^2}{2\sigma_t^2}}\,dv \cdot \1\left\lbrace\sigma_t>0\right\rbrace+\1\left\lbrace 0\in\Delta\right\rbrace \cdot \1\left\lbrace\sigma_t=0\right\rbrace,
\end{gather*}
where
$$
\sigma_t:=\left(\int\limits_0^t \xi_s^2\,ds\right)^{1/2}.
$$
\end{lemma}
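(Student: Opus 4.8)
The plan is to show that, conditionally on the entire trajectory of $\xi$, the Itô integral $\int_0^t \xi_s\,d\beta_s$ is a centred Gaussian random variable with conditional variance $\sigma_t^2$, which on the event $\{\sigma_t=0\}$ degenerates to the point mass at the origin. The cleanest route is through the conditional characteristic function: first I would prove that
$$
\E\left(\left.\exp\left(i\lambda\int\limits_0^t \xi_s\,d\beta_s\right)\right|\xi\right)=\exp\left(-\frac{\lambda^2}{2}\sigma_t^2\right),\quad \lambda\in\mbR,
$$
and then read off the conditional law by the uniqueness theorem for characteristic functions applied to a regular version of the conditional distribution, which exists because the integral is a real-valued random variable. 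Note that, since $\xi$ is continuous, $\sigma_t<+\infty$ almost surely, so the formula is always meaningful.

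To establish the conditional characteristic function I would approximate the integral by the non-anticipating Riemann sums
$$
S_n:=\sum_{k=0}^{n-1} \xi_{t_k^n}\bigl(\beta_{t_{k+1}^n}-\beta_{t_k^n}\bigr)
$$
along a sequence of partitions $0=t_0^n<\cdots<t_n^n=t$ whose mesh tends to zero, so that $S_n\to\int_0^t \xi_s\,d\beta_s$ in probability. The key point is that, because $\beta$ is independent of $\xi$, conditioning on $\xi$ freezes the factors $\xi_{t_k^n}$ while leaving the increments $\beta_{t_{k+1}^n}-\beta_{t_k^n}$ independent and $N(0,t_{k+1}^n-t_k^n)$-distributed; hence the conditional characteristic function factorises over the increments,
$$
\E\left(\left.e^{i\lambda S_n}\right|\xi\right)=\prod_{k=0}^{n-1}\exp\left(-\frac{\lambda^2}{2}\xi_{t_k^n}^2\bigl(t_{k+1}^n-t_k^n\bigr)\right)=\exp\left(-\frac{\lambda^2}{2}\sum_{k=0}^{n-1}\xi_{t_k^n}^2\bigl(t_{k+1}^n-t_k^n\bigr)\right),
$$
and by continuity of $\xi$ the Riemann sum in the exponent converges almost surely to $\int_0^t \xi_s^2\,ds=\sigma_t^2$.

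I would then pass to the limit by testing against an arbitrary bounded $\sigma(\xi)$-measurable random variable $Z$. Writing $\E\bigl(e^{i\lambda S_n}Z\bigr)=\E\bigl(\E(e^{i\lambda S_n}\mid\xi)\,Z\bigr)$ and using that both $e^{i\lambda S_n}$ and the exponentials in the displayed product are bounded by $1$, dominated convergence applied to each side (the left via the convergence in probability of $S_n$, the right via the almost sure convergence of the Riemann sums) yields
$$
\E\left(e^{i\lambda\int_0^t \xi_s\,d\beta_s}Z\right)=\E\left(e^{-\frac{\lambda^2}{2}\sigma_t^2}Z\right),
$$
which is precisely the defining relation for the conditional characteristic function above. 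Finally, on $\{\sigma_t>0\}$ the function $\lambda\mapsto\exp(-\tfrac{\lambda^2}{2}\sigma_t^2)$ is the characteristic function of the $N(0,\sigma_t^2)$ law, producing the density term $\frac{1}{\sqrt{2\pi}\sigma_t}e^{-v^2/(2\sigma_t^2)}$, while on $\{\sigma_t=0\}$ it equals $1$, the characteristic function of $\delta_0$, producing the term $\1\{0\in\Delta\}$; assembling the two cases gives the stated formula.

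The main obstacle is the rigorous handling of the conditioning rather than any single computation: one must justify that the conditional expectation factorises over the increments, which rests squarely on the independence of $\xi$ and $\beta$, and, more delicately, that the characteristic-function identity genuinely identifies the \emph{regular} conditional distribution of the integral given $\xi$, so that $\E(\1\{\int_0^t\xi_s\,d\beta_s\in\Delta\}\mid\xi)$ can be evaluated for every Borel $\Delta$ simultaneously on a single set of full probability. The degenerate case $\sigma_t=0$ also warrants a separate word, since there the Gaussian density is unavailable and one argues directly that the integral vanishes almost surely on that event.
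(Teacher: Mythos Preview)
Your proposal is correct and follows the standard route the paper alludes to; the paper in fact omits the proof, remarking only that it ``can be obtained in a standard way by approximating indicator functions with smooth bounded functions.'' Your characteristic-function argument is a clean instance of exactly this idea---the exponentials $e^{i\lambda\cdot}$ are the smooth bounded test functions, and invoking the uniqueness theorem for a regular conditional distribution replaces the explicit approximation of $\1_\Delta$ that the paper's hint suggests.
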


\begin{proof}
The proof of this lemma can be obtained in a standard way by approximating indicator functions with smooth bounded functions and is therefore omitted.
\end{proof}

Now fix arbitrary $u\in\mbR$ and define the stochastic processes
\begin{align*}
X_1(t)&:=\dfrac{\partial x}{\partial u}(u,t),\quad t\geqslant 0,\\
X_2(t)&:=\dfrac{\frac{\partial^2 x}{\partial u^2}(u,t)}{\frac{\partial x}{\partial u}(u,t)},\quad t\geqslant 0.
\end{align*}

\begin{lemma}
\label{lemma3.7}
For any $t>0$ the joint distribution of the random variables $X_1(t)$ and $X_2(t)$ has a density of the form
\begin{gather*}
\pi[X_1(t),X_2(t)](z_1,z_2)=\\
=\dfrac{e^{\frac{\pi^2}{2L't}-\frac{L't}{8}}}{\pi\sqrt{2\pi L''t}} \cdot \dfrac{1}{\sqrt{z_1}} \cdot \int\limits_0^{+\infty} \dfrac{e^{-\frac{v^2}{2L't}} \sinh v\sin\frac{\pi v}{L't}}{\left((1+2z_1\cosh v+z_1^2)+ \frac{L'}{L''}z_2^2\right)^{3/2}}\,dv,\quad z_1>0,\quad z_2\in\mbR.
\end{gather*}
\end{lemma}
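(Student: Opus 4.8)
The plan is to derive stochastic differential equations for $X_1$ and $X_2$, recognise $X_2(t)$ as a conditionally Gaussian stochastic integral against a Brownian motion independent of $X_1$, and then reduce the problem to the joint law of a geometric Brownian motion and its quadratic exponential functional, for which I would invoke the explicit Hartman--Watson/Yor formula.

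First I would differentiate equation~\eqref{equation9} once more in $u$ to obtain the equation for $Y_2:=\frac{\partial^2 x}{\partial u^2}(u,\cdot)$, namely $dY_2=\int_\mbR[\vf''(x-q)Y_1^2+\vf'(x-q)Y_2]\,W(dq,dt)$, where $Y_1:=\frac{\partial x}{\partial u}(u,\cdot)=X_1$. Applying It\^o's formula to the quotient $X_2=Y_2/Y_1$ and computing the relevant (co)variations of the Wiener-sheet integrals (using $\int_\mbR\vf'^2=L'$, $\int_\mbR\vf''^2=L''$), one finds that the two finite-variation contributions cancel exactly; the crucial identity here is $\int_\mbR\vf'\vf''=\frac12\int_\mbR(\vf'^2)'=0$, valid since $\vf$ has compact support, which simultaneously kills the cross term $\int_\mbR\vf'\vf''$. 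The outcome is the clean representation $dX_2=X_1\int_\mbR\vf''(x(u,s)-q)\,W(dq,ds)$.

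Next I would set $\tilde w(t):=\frac{1}{\sqrt{L''}}\int_0^t\int_\mbR\vf''(x(u,s)-q)\,W(dq,ds)$, so that $dX_2=\sqrt{L''}\,X_1\,d\tilde w$. From $\qv{w_u}_t=t$ (Lemma~\ref{lemma3.1}), $\qv{\tilde w}_t=\frac{1}{L''}\int_0^t\int_\mbR\vf''^2\,dq\,ds=t$ and $\jqv{w_u}{\tilde w}_t=\frac{1}{\sqrt{L'L''}}\int_0^t\int_\mbR\vf'\vf''\,dq\,ds=0$, the multidimensional L\'evy characterization shows that $(w_u,\tilde w)$ is a two-dimensional standard Brownian motion; in particular $\tilde w$ is a standard Wiener process independent of $w_u$. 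Since $X_1(s)=\exp[-\tfrac12 L's+\sqrt{L'}w_u(s)]$ is a functional of $w_u$ alone, while $X_2(t)=\sqrt{L''}\int_0^t X_1(s)\,d\tilde w(s)$, Lemma~\ref{lemma3.6} applies with $\xi_s=\sqrt{L''}X_1(s)$ and $\beta=\tilde w$: conditionally on $X_1$, the variable $X_2(t)$ is centred Gaussian with variance $\sigma_t^2=L''\int_0^t X_1^2(s)\,ds$.

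It then remains to integrate this conditional Gaussian density against the joint law of $X_1(t)$ and $\sigma_t^2$. A time change $r=L's$ turns $\ln X_1$ into a Brownian motion $\hat B$ with drift $-\tfrac12$, so that $\sigma_t^2=\frac{L''}{L'}A_T$ with $A_T:=\int_0^T e^{2\hat B_r}\,dr$ and $T:=L't$, while conditioning on $X_1(t)=z_1$ becomes conditioning on $\hat B_T=\ln z_1=:x$. Thus the density of $(X_1(t),X_2(t))$ at $(z_1,z_2)$ equals $\frac{1}{z_1}$ times the integral over $a>0$ of the $\mathcal N(0,\frac{L''}{L'}a)$-density in $z_2$ against the joint density of $(\hat B_T,A_T)$ at $(x,a)$. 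Supplying this last joint law is the main obstacle: via Girsanov (which produces the factors $e^{-x/2}$ and $e^{-T/8}$) it reduces to the driftless case, for which the Hartman--Watson/Yor formula expresses the joint density of $(B_T,\int_0^T e^{2B_r}\,dr)$ at $(x,a)$ as $\frac1a\exp(-\frac{1+e^{2x}}{2a})\,\theta_{e^x/a}(T)$, where $\theta_r(T)=\frac{r}{\sqrt{2\pi^3T}}e^{\pi^2/(2T)}\int_0^{+\infty}e^{-v^2/(2T)}e^{-r\cosh v}\sinh v\sin\frac{\pi v}{T}\,dv$. This explicit exponential-functional law is the genuinely deep analytic input, responsible for the factors $e^{\pi^2/(2L't)}$, $\sin\frac{\pi v}{L't}$ and $\sinh v$ in the answer. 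After substituting and interchanging the $a$- and $v$-integrals, the inner integral collapses via $\int_0^{+\infty}a^{-5/2}e^{-C/a}\,da=\Gamma(\tfrac32)C^{-3/2}$ with $C=\tfrac12[(1+2z_1\cosh v+z_1^2)+\tfrac{L'}{L''}z_2^2]$; collecting the constants and using $\frac{1}{z_1}e^{x/2}=z_1^{-1/2}$ together with $T=L't$ then yields exactly the claimed formula.
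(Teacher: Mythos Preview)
Your proposal is correct and follows essentially the same route as the paper: derive the driving SDEs for $X_1$ and $X_2$, recognise two independent Brownian motions via the vanishing of $\int_\mbR\vf'\vf''$, apply Lemma~\ref{lemma3.6} to get the conditionally Gaussian law of $X_2(t)$ given $X_1$, then integrate against the known joint density of a geometric Brownian motion and its exponential functional and collapse the inner integral via the $\Gamma(\tfrac32)$ identity. The only cosmetic differences are that the paper obtains the SDE for $X_2$ more quickly by differentiating the exponential representation~\eqref{equation7} (rather than applying It\^o to the quotient $Y_2/Y_1$), reaches the independent Brownian motions through Doob's representation theorem instead of a direct L\'evy characterisation, and cites the Borodin--Salminen formula for $\pi[X_1(t),\sigma_t^2]$ directly rather than going through Girsanov and the Hartman--Watson/Yor density.
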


\begin{proof}
Note that from representation~\eqref{equation7} it follows that
\begin{equation}
\label{equation13}
\dfrac{\partial^2 x}{\partial u^2}(u,t)=\dfrac{\partial x}{\partial u}(u,t) \cdot \int\limits_0^t \int\limits_\mbR \vf''(x(u,s)-q)\dfrac{\partial x} {\partial u}(u,s)\,W(dq,ds),\quad t\geqslant 0.
\end{equation}
From~\eqref{equation9} and~\eqref{equation13} we obtain that
\begin{align*}
X_1(t)&=1+\int\limits_0^t \int\limits_\mbR \vf'(x(u,s)-q) X_1(s)\,W(dq,ds),\quad t\geqslant 0,\\
X_2(t)&=\int\limits_0^t \int\limits_\mbR \vf''(x(u,s)-q) X_1(s)\,W(dq,ds),\quad t\geqslant 0.
\end{align*}
By the properties of the integral with respect to a Wiener sheet, we have
\begin{align*}
\qv{X_1}_t&=\int\limits_0^t \int\limits_\mbR \vf'^2(x(u,s)-q)X_1^2(s)\,dq\,ds=L' \cdot \int\limits_0^t X_1^2(s)\,ds,\quad t\geqslant 0,\\
\qv{X_2}_t&=\int\limits_0^t \int\limits_\mbR \vf''^2(x(u,s)-q)X_1^2(s)\,dq\,ds=L'' \cdot \int\limits_0^t X_1^2(s)\,ds,\quad t\geqslant 0,\\
\jqv{X_1}{X_2}_t&=\int\limits_0^t \int\limits_\mbR \vf'(x(u,s)-q)\varphi''(x(u,s)-q)X_1^2(s)\,dq\,ds=0,\quad t\geqslant 0.
\end{align*}
Since with probability one $X_1(t)>0$ for all $t\geqslant 0$, using Doob's theorem (see~\cite[Chapter~5, Theorem~5.12]{LiptserShiryaev}), we obtain that the pair of the stochastic processes $X_1$ and $X_2$ is a solution of the following system of equations:
\begin{equation}
\label{equation14}
\begin{cases}
X_1(t)=1+\sqrt{L'} \cdot \int\limits_0^t X_1(s)\,dW_1(s),\quad t\geqslant 0,\\
X_2(t)=\sqrt{L''} \cdot \int\limits_0^t X_1(s)\,dW_2(s),\quad t\geqslant 0,
\end{cases}
\end{equation}
where the Wiener processes $\{W_1(t),\; t\geqslant 0\}$ and $\{W_2(t),\; t\geqslant 0\}$ can be defined by the equalities
\begin{align*}
W_1(t)&=\dfrac{1}{\sqrt{L'}} \int\limits_0^t \dfrac{dX_1(s)}{X_1(s)},\quad t\geqslant 0,\\
W_2(t)&=\dfrac{1}{\sqrt{L''}} \int\limits_0^t \dfrac{dX_2(s)}{X_1(s)},\quad t\geqslant 0.
\end{align*}
Note that the Wiener processes $W_1$ and $W_2$ are independent, since
$$
\jqv{W_1}{W_2}_t=\dfrac{1}{\sqrt{L'L''}} \int\limits_0^t \dfrac{d\jqv{X_1}{X_2}_s} {X_1^2(s)}=0,\quad t\geqslant 0.
$$

Solving system~\eqref{equation14}, we get the representations
\begin{gather*}
X_1(t)=\exp\left[-\frac{1}{2}L't+\sqrt{L'}W_1(t)\right],\quad t\geqslant 0,\\
X_2(t)=\sqrt{L''} \int\limits_0^t \exp\left[-\frac{1}{2}L's+ \sqrt{L'}W_1(s)\right]dW_2(s),\quad t\geqslant 0.
\end{gather*}

Now to find the density of the joint distribution of the random variables $X_1(t)$ and $X_2(t)$ for $t>0$ note that for any Borel sets $\Delta_1\subset(0;+\infty)$ and $\Delta_2\subset\mbR$ we have
$$
\Prob{X_1(t)\in\Delta_1,\; X_2(t)\in\Delta_2}=\E\left[\1\{X_1(t)\in\Delta_1\} \cdot \E\left(\1\{X_2(t)\in\Delta_2\} \mid X_1\right)\right].
$$
However, since as is easily seen the filtrations generated by the stochastic processes $X_1$ and $W_1$ (and completed with the sets of zero probability) coincide and $W_1$ and $W_2$ are independent, $X_1$ and $W_2$ are independent too, and so by Lemma~\ref{lemma3.6}
\begin{gather*}
\E\left(\left.\1\{X_2(t)\in\Delta_2\}\right| X_1\right)=\\
=\E\left(\left.\1\left\lbrace\sqrt{L''}\int\limits_0^t X_1(s)\,dW_2(s)\in\Delta_2\right\rbrace\right| X_1\right)=\\
=\frac{1}{\sqrt{L''}}\int\limits_{\Delta_2} \dfrac{1}{\sqrt{2\pi}\sigma_t} e^{-\frac{v^2}{2L''\sigma_t^2}}\,dv,
\end{gather*}
where
$$
\sigma_t:=\left(\int\limits_0^t X_1^2(s)\,ds\right)^{1/2}>0.
$$
Therefore,
\begin{gather*}
\Prob{X_1(t)\in\Delta_1,\; X_2(t)\in\Delta_2}=\\
=\E\left(\1\{X_1(t)\in\Delta_1\} \cdot \frac{1}{\sqrt{L''}}\int\limits_{\Delta_2} \dfrac{1}{\sqrt{2\pi}\sigma_t}e^{-\frac{v^2}{2L''\sigma_t^2}}\,dv\right)=\\
=\frac{1}{\sqrt{L''}}\int\limits_{\Delta_2} \E\left(\1\{X_1(t)\in\Delta_1\} \cdot \dfrac{1}{\sqrt{2\pi}\sigma_t}e^{-\frac{v^2}{2L''\sigma_t^2}}\right)\,dv.
\end{gather*}
The density of the joint distribution of the random variables $X_1(t)$ and $\sigma_t^2\equiv\int\limits_0^t X_1^2(s)\,ds$ has the following form (see~\cite[p.~265, Formula~1.10.8]{BorodinSalminen}):
$$
\pi\left[X_1(t),\sigma_t^2\right](z_1,z_2)=\dfrac{\exp\left[-\frac{L't}{8}- \frac{1+z_1^2}{2L'z_2}\right]}{2z_1^{3/2}z_2} \cdot i_{\frac{L't}{2}}\left(\dfrac{z_1}{L'z_2}\right),\quad z_1,z_2>0,
$$
where (see~\cite[p.~644]{BorodinSalminen})
$$
i_y(z):=\dfrac{ze^{\frac{\pi^2}{4y}}}{\pi\sqrt{\pi y}}\int\limits_0^{+\infty} \exp\left[-z\cosh v- \frac{v^2}{4y}\right]\sinh v\sin\dfrac{\pi v}{2y}\,dv,\quad y,z>0.
$$
Hence (in the second equality we simply change $v$ to $z_2$ and $z_2$ to $v$)
\begin{gather*}
\Prob{X_1(t)\in\Delta_1,\; X_2(t)\in\Delta_2}=\\
=\int\limits_{\Delta_2}dv \int\limits_{\Delta_1}dz_1 \int\limits_0^{+\infty}dz_2 \left[\dfrac{\exp\left[-\frac{v^2}{2L''z_2}- \frac{L't}{8}-\frac{1+z_1^2}{2L'z_2}\right]}{2\sqrt{2\pi L''} \cdot z_1^{3/2} z_2^{3/2}} \cdot i_{\frac{L't}{2}}\left(\dfrac{z_1}{L'z_2}\right)\right]=\\
=\int\limits_{\Delta_2}dz_2 \int\limits_{\Delta_1}dz_1 \int\limits_0^{+\infty}dv \left[\dfrac{\exp\left[-\frac{z_2^2}{2L''v}-\frac{L't}{8}- \frac{1+z_1^2}{2L'v}\right]}{2\sqrt{2\pi L''} \cdot z_1^{3/2}v^{3/2}} \cdot i_{\frac{L't}{2}}\left(\dfrac{z_1}{L'v}\right)\right]=\\
=\int\limits_{\Delta_1} \int\limits_{\Delta_2} \left[\int\limits_0^{+\infty} \dfrac{\exp\left[-\frac{z_2^2}{2L''v}-\frac{L't}{8}-\frac{1+z_1^2}{2L'v}\right]} {2\sqrt{2\pi L''} \cdot z_1^{3/2}v^{3/2}} \cdot i_{\frac{L't}{2}}\left(\dfrac{z_1}{L'v}\right)\,dv\right]\,dz_1\,dz_2.
\end{gather*}
Thus, for $z_1>0$ and $z_2\in\mbR$ we have
\begin{gather*}
\pi[X_1(t),X_2(t)](z_1,z_2)=\int\limits_0^{+\infty} \dfrac{\exp\left[-\frac{z_2^2}{2L''v}-\frac{L't}{8}-\frac{1+z_1^2}{2L'v}\right]} {2\sqrt{2\pi L''} \cdot z_1^{3/2}v^{3/2}} \cdot i_{\frac{L't}{2}}\left(\dfrac{z_1}{L'v}\right)\,dv=\\
=\dfrac{e^{\frac{\pi^2}{2L't}-\frac{L't}{8}}}{2\pi^2L'\sqrt{L'L''t}} \int\limits_0^{+\infty} \dfrac{e^{-\frac{z_2^2}{2L''v}-\frac{1+z_1^2}{2L'v}}}{\sqrt{z_1} \cdot v^{5/2}} \cdot \left(\int\limits_0^{+\infty} e^{-\frac{z_1}{L'v}\cosh r-\frac{r^2}{2L't}} \sinh r\sin\dfrac{\pi r}{L't}\,dr\right)\,dv=\\
=\dfrac{e^{\frac{\pi^2}{2L't}-\frac{L't}{8}}}{2\pi^2L'\sqrt{L'L''t}} \cdot \dfrac{1}{\sqrt{z_1}} \cdot \int\limits_0^{+\infty} \left[\int\limits_0^{+\infty} \dfrac{\exp\left[-\frac{1}{2v} \left(\frac{z_2^2}{L''}+\frac{1+2z_1\cosh r+z_1^2}{L'}\right)\right]}{v^{5/2}}\,dv\right] \cdot e^{-\frac{r^2}{2L't}}\sinh r \sin\dfrac{\pi r}{L't}\,dr.
\end{gather*}
Finally, since for $K>0$
$$
\int\limits_0^{+\infty} \dfrac{1}{v^{5/2}}e^{-\frac{K}{2v}}\,dv= \dfrac{\sqrt{2\pi}}{K^{3/2}},
$$
we obtain
\begin{gather*}
\pi[X_1(t),X_2(t)](z_1,z_2)=\\
=\dfrac{e^{\frac{\pi^2}{2L't}-\frac{L't}{8}}}{2\pi^2L'\sqrt{L'L''t}} \cdot \dfrac{1}{\sqrt{z_1}} \cdot \int\limits_0^{+\infty} \dfrac{e^{-\frac{v^2}{2L't}} \sinh v\sin\dfrac{\pi v}{L't}}{\left(\frac{z_2^2}{L''}+\frac{1+2z_1\cosh v+z_1^2}{L'}\right)^{3/2}} \cdot \sqrt{2\pi}\,dv=\\
=\dfrac{e^{\frac{\pi^2}{2L't}-\frac{L't}{8}}} {\pi\sqrt{2\pi L''t}} \cdot \dfrac{1}{\sqrt{z_1}} \cdot \int\limits_0^{+\infty} \dfrac{e^{-\frac{v^2}{2L't}} \sinh v\sin\frac{\pi v}{L't}}{\left((1+2z_1\cosh v+z_1^2)+ \frac{L'}{L''}z_2^2\right)^{3/2}}\,dv.
\qedhere
\end{gather*}
\end{proof}

\begin{remark}
\label{remark3.8}
The density of the joint distribution of $X_1(t)$ and $X_2(t)$ is symmetric in the second variable:
$$
\pi[X_1(t),X_2(t)](z_1,-z_2)=\pi[X_1(t),X_2(t)](z_1,z_2),\quad z_1>0,\quad z_2\in\mbR.
$$
\end{remark}

\begin{theorem}
\label{theorem3.9}
For any $t>0$ and $u\in\mbR$ the joint distribution of the random variables $p_t(u)$ and $p'_t(u)$ has a density of the form
\begin{gather*}
\pi\left[p_t(u),p'_t(u)\right](z_1,z_2)=\dfrac{e^{\frac{\pi^2}{2L't}-\frac{L't}{8}}}{\pi\sqrt{2\pi L''t}} \cdot \int\limits_0^{+\infty} \dfrac{z_1^{3/2}e^{-\frac{v^2}{2L't}}\sinh v\sin\frac{\pi v}{L't}} {\left((z_1^2+2z_1^3\cosh v+z_1^4)+\frac{L'}{L''}z_2^2\right)^{3/2}}\,dv,\\
z_1\geqslant 0,\quad z_2\in\mbR,\quad z_1^2+z_2^2\neq 0.
\end{gather*}
\end{theorem}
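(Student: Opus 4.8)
The plan is to reduce the computation to the already-known joint density of $X_1(t)$ and $X_2(t)$ from Lemma~\ref{lemma3.7} by transporting the pair $(p_t(u),p_t'(u))$ through the flow by means of Theorem~\ref{theorem2.10}(iv) and then performing an explicit change of variables. The first ingredient is a pair of pointwise identities. Differentiating $p_t(u)=1/\frac{\partial x}{\partial u}(x^{-1}(u,t),t)$ and using $\frac{d}{du}x^{-1}(u,t)=p_t(u)$ (the derivative of the inverse diffeomorphism), one obtains
$$
p_t'(u)=-\dfrac{\partial^2 x}{\partial u^2}(x^{-1}(u,t),t)\cdot p_t(u)^3.
$$
Since $x^{-1}(x(u,t),t)=u$, evaluating at the image point $x(u,t)$ and recalling the definitions of $X_1$ and $X_2$ yields the clean relations
$$
p_t(x(u,t))=\dfrac{1}{X_1(t)},\qquad p_t'(x(u,t))=-\dfrac{X_2(t)}{X_1(t)^2}.
$$

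Next I would apply Theorem~\ref{theorem2.10}(iv) to the $\theta$-homogeneous random field $A(u,\cdot):=(p_t(u),p_t'(u))$ taking values in $E:=(0;+\infty)\times\mbR$ (its $\theta$-homogeneity is Lemma~\ref{lemma2.9}, and its measurability follows from that of $x$), with the flow $\vf_{0,t}=x(\cdot,t)$ and the non-negative test function $f(z_1,z_2):=\frac{1}{z_1}\1\{(z_1,z_2)\in\Delta\}$ for an arbitrary Borel set $\Delta\subset E$. Since $\frac{1}{\frac{\partial\vf_{0,t}}{\partial u}(\vf_{0,t}^{-1}(0))}=p_t(0)$, the weight $\frac{1}{z_1}$ cancels the factor $p_t(0)$ on the right-hand side, while the identities above rewrite the left-hand side in terms of $X_1(t)$ and $X_2(t)$; one is led to
$$
\Prob{(p_t(0),p_t'(0))\in\Delta}=\E\left[X_1(t)\cdot\1\left\{\left(\dfrac{1}{X_1(t)},-\dfrac{X_2(t)}{X_1(t)^2}\right)\in\Delta\right\}\right].
$$
The expectation appearing in Theorem~\ref{theorem2.10}(iv) is finite because the integrand reduces to $\1\{\cdot\}$, so the theorem applies. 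By the stationarity (Theorem~\ref{theorem2.2}) and the $\theta$-homogeneity (Lemma~\ref{lemma2.9}) the distribution of $(p_t(u),p_t'(u))$ does not depend on $u$, hence treating $u=0$ suffices.

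Finally I would change variables $(z_1,z_2)=T(x_1,x_2):=(1/x_1,\,-x_2/x_1^2)$ in the last expectation, written as an integral against the density of Lemma~\ref{lemma3.7}. The inverse is $x_1=1/z_1$, $x_2=-z_2/z_1^2$, with Jacobian $\left|\frac{\partial(x_1,x_2)}{\partial(z_1,z_2)}\right|=z_1^{-4}$; combined with the surviving factor $x_1=1/z_1$ this gives the overall weight $z_1^{-5}$, so that
$$
\pi[p_t(0),p_t'(0)](z_1,z_2)=z_1^{-5}\,\pi[X_1(t),X_2(t)]\!\left(\dfrac{1}{z_1},-\dfrac{z_2}{z_1^2}\right).
$$
Substituting the formula of Lemma~\ref{lemma3.7} and simplifying then yields the claimed expression: the factor $1/\sqrt{x_1}$ becomes $\sqrt{z_1}$, and writing $1+2x_1\cosh v+x_1^2+\frac{L'}{L''}x_2^2$ over the common denominator $z_1^4$ converts the cube of the square root into the polynomial $z_1^2+2z_1^3\cosh v+z_1^4+\frac{L'}{L''}z_2^2$ while contributing a factor $z_1^{6}$, so that $z_1^{-5}\cdot\sqrt{z_1}\cdot z_1^{6}=z_1^{3/2}$. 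The boundary value $z_1=0$ (which carries probability zero since $p_t>0$ almost surely) is consistent with the stated formula, whose integrand vanishes there whenever $z_2\neq 0$.

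The main obstacle I expect is the careful bookkeeping in this last step: getting the Jacobian and the cancellation of the powers of $z_1$ right, and in particular combining the two contributions $1+2x_1\cosh v+x_1^2$ and $\frac{L'}{L''}x_2^2$ over the common denominator so that the denominator assumes exactly the polynomial form appearing in the statement. The derivation of $p_t'$ through differentiation of the inverse diffeomorphism is the other delicate point, but it becomes routine once the chain rule for $x^{-1}(\cdot,t)$ is invoked.
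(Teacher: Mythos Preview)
Your proposal is correct and follows essentially the same route as the paper: apply Theorem~\ref{theorem2.10}(iv) with $A(u,\cdot)=(p_t(u),p_t'(u))$ and $f(z_1,z_2)=z_1^{-1}\1\{(z_1,z_2)\in\Delta\}$, use the identities $p_t(x(u,t))=1/X_1(t)$ and $p_t'(x(u,t))=-X_2(t)/X_1^2(t)$, then change variables and substitute Lemma~\ref{lemma3.7}. The only cosmetic difference is that the paper invokes Remark~\ref{remark3.8} (the evenness of the density in the second variable) to replace $-z_2/z_1^2$ by $z_2/z_1^2$ before simplifying, whereas you implicitly rely on the fact that only $z_2^2$ appears in the formula of Lemma~\ref{lemma3.7}; both amount to the same thing.
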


\begin{proof}
The proof of this theorem is similar to that of Theorem~\ref{theorem3.3}. Set
\begin{gather*}
E:=(0;+\infty) \times \mbR,\\
\mathcal{E}:=\mcB((0;+\infty) \times \mbR),
\end{gather*}
and, for fixed $t_0>0$,
$$
A(u,t):=(p_{t_0}(u),p'_{t_0}(u)),\quad u\in\mbR,\quad t\geqslant 0.
$$
Note that the mapping $(\omega,u,t)\mapsto A(\omega,u,t)$ is measurable, since such is the mapping $(\omega,u,t)\mapsto x(\omega,u,t)$.

Therefore, for the stochastic flow~\eqref{equation3} and the function
$$
f(z_1,z_2)=\dfrac{1}{z_1} \cdot \1\{z_1\in\Delta_1,\; z_2\in\Delta_2\},\quad (z_1,z_2)\in E,
$$
where $\Delta_1\in\mcB((0;+\infty))$ and $\Delta_2\in\mcB(\mbR)$ are arbitrary, by Theorem~\ref{theorem2.10} with $s=0$ and $t=t_0$ we have (for convenience, until the end of the proof we write $t$ instead of $t_0$)
\begin{gather*}
\E\left[\dfrac{\partial x}{\partial u}(u,t) \cdot \1\left\lbrace\dfrac{1}{\frac{\partial x}{\partial u}(u,t)}\in\Delta_1,\; -\dfrac{\frac{\partial^2 x}{\partial u^2}(u,t)}{\left(\frac{\partial x}{\partial u}(u,t)\right)^3}\in\Delta_2\right\rbrace\right]=\\
=\E\left[\dfrac{1}{p_t(0)} \cdot \1\left\lbrace p_t(0)\in\Delta_1,\; p'_t(0)\in\Delta_2\right\rbrace \cdot p_t(0)\right],
\end{gather*}
so that
\begin{gather*}
\Prob{p_t(0)\in\Delta_1,\; p'_t(0)\in\Delta_2}=\E\left[X_1(t) \cdot \1\left\lbrace\dfrac{1}{X_1(t)}\in\Delta_1,\; -\dfrac{X_2(t)}{X_1^2(t)}\in\Delta_2\right\rbrace\right]=\\
=\int\limits_0^{+\infty} \int\limits_\mathbb{R} v_1 \cdot \1\left\lbrace\dfrac{1}{v_1}\in\Delta_1,\; -\dfrac{v_2}{v_1^2}\in\Delta_2\right\rbrace \cdot \pi[X_1(t),X_2(t)](v_1,v_2)\,dv_1\,dv_2=\\
=\left[
\begin{matrix}
z_1=\frac{1}{v_1}\\
z_2=-\frac{v_2}{v_1^2}\\
\end{matrix}
\right]=
\int\limits_{\Delta_1} \int\limits_{\Delta_2} \dfrac{1}{z_1^5} \cdot \pi[X_1(t),X_2(t)]\left(\dfrac{1}{z_1},-\dfrac{z_2}{z_1^2}\right)\,dz_1\,dz_2.
\end{gather*}
From this and from Remark~\ref{remark3.8} it follows that the joint distribution of the random variables $p_t(0)$ and $p'_t(0)$ has a density for which the following representation takes place:
\begin{equation}
\label{equation15}
\pi[p_t(0),p'_t(0)](z_1,z_2)=\dfrac{1}{z_1^5} \cdot \pi[X_1(t),X_2(t)]\left(\dfrac{1}{z_1},\dfrac{z_2}{z_1^2}\right), \quad z_1>0,\quad z_2\in\mbR.
\end{equation}
Therefore, by Lemma~\ref{lemma3.7} we obtain that for $z_1>0$ and $z_2\in\mbR$
\begin{gather*}
\pi[p_t(0),p'_t(0)](z_1,z_2)=\dfrac{1}{z_1^5} \cdot \dfrac{e^{\frac{\pi^2}{2L't}-\frac{L't}{8}}} {\pi\sqrt{2\pi L''t}} \cdot \sqrt{z_1} \cdot \int\limits_0^{+\infty} \dfrac{e^{-\frac{v^2}{2L't}}\sinh v\sin\frac{\pi v}{L't}} {\left(\frac{1+2z_1\cosh v+z_1^2}{z_1^2}+\frac{L'}{L''} \cdot \frac{z_2^2}{z_1^4}\right)^{3/2}}\,dv=\\
=\dfrac{e^{\frac{\pi^2}{2L't}-\frac{L't}{8}}} {\pi\sqrt{2\pi L''t}} \cdot z_1^{3/2} \cdot \int\limits_0^{+\infty} \dfrac{e^{-\frac{v^2}{2L't}}\sinh v\sin\frac{\pi v}{L't}}{\left((z_1^2+2z_1^3\cosh v+z_1^4)+\frac{L'}{L''}z_2^2\right)^{3/2}}\,dv.
\end{gather*}

It remains to note that by Lemma~\ref{lemma2.9} the density of the joint distribution of $p_t(u)$ and $p'_t(u)$ does not depend on $u\in\mbR$.
\end{proof}

\begin{remark}
\label{remark3.10}
The density of the joint distribution of $p_t(u)$ and $p'_t(u)$ is symmetric in the second variable:
$$
\pi[p_t(u),p'_t(u)](z_1,-z_2)=\pi[p_t(u),p'_t(u)](z_1,z_2),\quad z_1>0,\quad z_2\in\mbR.
$$
\end{remark}

\section{Level-crossing intensity for the stochastic process $p_t$}
\label{section4}

By Lemma~\ref{lemma2.9} the stochastic process $\{p_t(u),\; u\in\mbR\}$ is strictly stationary, and by Theorem~\ref{theorem3.3} all its one-dimensional distributions are continuous. Hence (see~\cite[pp.~146--147]{LeadbetterLindgrenRootzen}) for any $c\in\mbR$ almost surely it is not identically equal to $c$ on any interval, and so the number $N_t([0;1];c)$ of crossings of the level $c$ by this stochastic process on the interval $[0;1]$ and the corresponding level-crossing intensity $\mu_t(c)$ are well-defined.

\begin{theorem}
For any $t>0$ we have
$$
\mu_t(c)=\dfrac{\sqrt{2L''} \cdot e^{\frac{\pi^2}{2L't}-\frac{L't}{8}}} {\pi L'\sqrt{\pi t}} \cdot \dfrac{1}{\sqrt{c}} \cdot \int\limits_0^{+\infty} \dfrac{e^{-\frac{v^2}{2L't}}\sinh v\sin \frac{\pi v} {L't}} {\sqrt{1+\frac{2\cosh v}{c}+ \frac{1}{c^2}}}\,dv\qquad\text{for a.~e. $c>0$}.
$$
\end{theorem}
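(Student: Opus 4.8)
The plan is to apply Banach's formula (Theorem~\ref{theorem1.2}) to the strictly stationary process $\xi=\{p_t(u),\; u\in\mbR\}$, exploiting the one-dimensional marginal found in Theorem~\ref{theorem3.3} and the joint law of $(p_t(u),p'_t(u))$ found in Theorem~\ref{theorem3.9}. First I would check the three hypotheses. Since $x(\cdot,t)$ is a $C^\infty$-diffeomorphism with strictly positive derivative (Lemma~\ref{lemma3.1}), the density $p_t(u)=1/\frac{\partial x}{\partial u}(x^{-1}(u,t),t)$ is a smooth, hence absolutely continuous, function of $u$; the existence of the marginal density of $p_t(u)$ is exactly Theorem~\ref{theorem3.3}; and the conditional expectation $\E(\abs{p'_t(u)}\mid p_t(u)=c)$ is well-defined because the joint density of Theorem~\ref{theorem3.9} is finite and strictly positive at $z_1=c>0$, while $\abs{z_2}\,\pi[p_t(u),p'_t(u)](c,z_2)=O(z_2^{-2})$ as $\abs{z_2}\to\infty$ (the denominator grows like $\abs{z_2}^3$), so the defining integral converges.

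Next I would use the $\theta$-homogeneity of $\{(p_t(u),p'_t(u)),\; u\in\mbR\}$ from Lemma~\ref{lemma2.9}: it guarantees that the integrand $\E(\abs{p'_t(u)}\mid p_t(u)=c)\cdot\pi[p_t(u)](c)$ in Banach's formula does not depend on $u$, so the integral over $[0;1]$ collapses to a single evaluation. Recalling that, via the conditional density $\pi[p_t(u),p'_t(u)](c,\cdot)/\pi[p_t(u)](c)$, one has $\E(\abs{p'_t(u)}\mid p_t(u)=c)\cdot\pi[p_t(u)](c)=\int_\mbR \abs{z_2}\,\pi[p_t(u),p'_t(u)](c,z_2)\,dz_2$, I obtain for a.~e. $c>0$
$$
\mu_t(c)=\int\limits_\mbR \abs{z_2}\,\pi[p_t(u),p'_t(u)](c,z_2)\,dz_2.
$$

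The last step is the explicit evaluation. Using the symmetry in $z_2$ (Remark~\ref{remark3.10}) I would replace $\int_\mbR \abs{z_2}$ by $2\int_0^{+\infty} z_2$, insert the joint density from Theorem~\ref{theorem3.9}, and interchange the $v$- and $z_2$-integrations, which is legitimate by Tonelli's theorem since the integrand is non-negative. The resulting inner integral is elementary:
$$
\int\limits_0^{+\infty} \dfrac{z_2\,dz_2}{\left(A+\frac{L'}{L''}z_2^2\right)^{3/2}}=\dfrac{L''}{L'}\cdot\dfrac{1}{\sqrt{A}},\qquad A:=c^2+2c^3\cosh v+c^4.
$$
Factoring $A=c^4\bigl(1+\frac{2\cosh v}{c}+\frac{1}{c^2}\bigr)$ turns the $z_2$-integrated density into the integrand displayed in the theorem, with an overall $c$-power $c^{3/2}\cdot c^{-2}=c^{-1/2}$, i.~e.\ the factor $1/\sqrt{c}$. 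Collecting the constants $2\cdot\frac{e^{\pi^2/(2L't)-L't/8}}{\pi\sqrt{2\pi L''t}}\cdot\frac{L''}{L'}$ and simplifying $\frac{2L''}{L'\sqrt{2\pi L''t}}=\frac{\sqrt{2L''}}{L'\sqrt{\pi t}}$ yields the claimed expression.

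I expect the main obstacle to lie not in the computation but in the verification of the applicability of Banach's formula, chiefly in confirming that the conditional expectation $\E(\abs{p'_t(u)}\mid p_t(u)=c)$ is genuinely well-defined for (a.~e.) $c>0$ and that almost all trajectories of $p_t$ fall under the hypotheses of Theorem~\ref{theorem1.2}; once this is granted, the stationarity reduction and the single elementary integral above make the remainder routine algebra.
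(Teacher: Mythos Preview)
Your proposal is correct and follows essentially the same route as the paper: verify the hypotheses of Theorem~\ref{theorem1.2}, use stationarity (Lemma~\ref{lemma2.9}) to collapse the $u$-integral, exploit the symmetry of Remark~\ref{remark3.10}, insert the joint density of Theorem~\ref{theorem3.9}, swap the order of integration, and evaluate the elementary inner integral $\int_0^{+\infty} z\,(A+Bz^2)^{-3/2}\,dz=1/(B\sqrt{A})$. One small caveat: the integrand in the double integral carries the factor $\sin\frac{\pi v}{L't}$, which changes sign, so the interchange is not covered by Tonelli as you state but by Fubini (the integral of the absolute value is finite since the Gaussian factor $e^{-v^2/(2L't)}$ dominates $\sinh v$), a point the paper also leaves implicit.
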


\begin{proof}
It is easy to see that the stochastic process $\{p_t(u),\; u\in [0;1]\}$ satisfies the conditions of Theorem~\ref{theorem1.2} and so
\begin{gather*}
\mu_t(c)=\overline{\mu}_t(c)\qquad \text{for a.~e. $c>0$,}
\intertext{where}
\overline{\mu}_t(c):=\int\limits_0^1 \E(|p'_t(u)|\; |\; p_t(u)=c) \cdot \pi[p_t(u)](c)\,du.
\end{gather*}
However, from Theorem~\ref{theorem3.9}, the strict positivity of the density $\pi[p_t(u)](z)$ for $z>0$ implied by Theorem~\ref{theorem3.3}, and Remark~\ref{remark3.10}, it follows that
\begin{gather*}
\int\limits_0^1 \E(|p'_t(u)|\; |\; p_t(u)=c) \cdot \pi[p_t(u)](c)\,du=\\
=\int\limits_0^1 \int\limits_{-\infty}^{\infty} |z| \cdot \pi[p_t(u),p'_t(u)](c,z)\,dz\,du=2\int\limits_0^{+\infty} z \cdot \pi[p_t(0),p'_t(0)](c,z)\,dz
\end{gather*}
and so
\begin{equation}
\label{equation16}
\overline{\mu}_t(c)=2\int\limits_0^{+\infty} z \cdot \pi[p_t(0),p'_t(0)](c,z)\,dz.
\end{equation}
Therefore, by the same Theorem~\ref{theorem3.9}
\begin{gather*}
\overline{\mu}_t(c)=2\int\limits_0^{+\infty} \left[\dfrac{e^{\frac{\pi^2}{2L't}-\frac{L't}{8}}} {\pi\sqrt{2\pi L''t}} \cdot z \cdot \int\limits_0^{+\infty} \dfrac{c^{3/2}e^{-\frac{v^2}{2L't}}\sinh v \sin\frac{\pi v}{L't}}{\left((c^2+2c^3\cosh v+c^4)+ \frac{L'}{L''}z^2\right)^{3/2}}\,dv\right]\,dz=\\
=\dfrac{\sqrt{2} \cdot e^{\frac{\pi^2}{2L't}-\frac{L't}{8}}} {\pi\sqrt{\pi L''t}} \cdot c^{3/2} \cdot \int\limits_0^{+\infty} \left[\int\limits_0^{+\infty} \dfrac{z\,dz}{\left((c^2+2c^3\cosh v+c^4)+\frac{L'}{L''}z^2\right)^{3/2}}\right] e^{-\frac{v^2}{2L't}}\sinh v\sin\frac{\pi v}{L't}\,dv.
\end{gather*}
Finally, since for any $A,B>0$
\begin{gather*}
\int\limits_0^{+\infty} \dfrac{z\,dz}{\left(A+Bz^2\right)^{3/2}}= \dfrac{1}{B\sqrt{A}},
\end{gather*}
we obtain
\begin{align*}
\overline{\mu}_t(c)&=\dfrac{\sqrt{2} \cdot e^{\frac{\pi^2}{2L't}-\frac{L't}{8}}} {\pi\sqrt{\pi L''t}} \cdot c^{3/2} \cdot \int\limits_0^{+\infty} \dfrac{L''e^{-\frac{v^2}{2L't}}\sinh v\sin\frac{\pi v}{L't}} {L'\sqrt{c^2+2c^3\cosh v+c^4}}\,dv=\\
&=\dfrac{\sqrt{2L''} \cdot e^{\frac{\pi^2}{2L't}-\frac{L't}{8}}} {\pi L'\sqrt{\pi t}} \cdot \dfrac{1}{\sqrt{c}} \cdot \int\limits_0^{+\infty} \dfrac{e^{-\frac{v^2}{2L't}}\sinh v\sin \frac{\pi v} {L't}} {\sqrt{1+\frac{2\cosh v}{c}+\frac{1}{c^2}}}\,dv.
\qedhere
\end{align*}
\end{proof}

It seems difficult to find the asymptotics of $\overline{\mu}_t(c)$ as $c\to +\infty$ by direct analytic methods. However, one can find it with the help of a probabilistic approach\footnote{The author is grateful to Prof.~A.~A.~Dorogovtsev for the advice to use this approach.}.

\begin{theorem}
For any $t>0$ we have
$$
\overline{\mu}_t(c)=\dfrac{e^{-\frac{L't}{8}}\sqrt{L''}}{\pi\sqrt{2L'}} \cdot \sqrt{\dfrac{c}{\ln c}} \cdot \exp\left[-\frac{(\ln c)^2}{2L't}\right] \cdot (1+\overline{o}(1)),\quad c\to +\infty.
$$
\end{theorem}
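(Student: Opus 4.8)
The plan is to bypass the explicit representation~\eqref{equation16} and instead exploit the probabilistic structure behind Lemma~\ref{lemma3.7}. Since $\{(p_t(u),p'_t(u)),\; u\in\mbR\}$ is stationary by Lemma~\ref{lemma2.9}, the integrand defining $\overline{\mu}_t(c)$ is independent of $u$, so that
$$
\overline{\mu}_t(c)=\E\left(\abs{p'_t(0)} \mid p_t(0)=c\right)\cdot\pi[p_t(0)](c).
$$
Recall from the proof of Theorem~\ref{theorem3.9} that $p_t(0)\stackrel{d}{=}1/X_1(t)$ and $p'_t(0)\stackrel{d}{=}-X_2(t)/X_1^2(t)$ jointly; hence conditioning on $p_t(0)=c$ is the same as conditioning on $X_1(t)=1/c$, and on this event $\abs{p'_t(0)}=c^2\abs{X_2(t)}$. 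The marginal factor is in fact \emph{exact}: expanding the square in Theorem~\ref{theorem3.3} gives
$$
\pi[p_t(0)](c)=\dfrac{e^{-\frac{L't}{8}}}{\sqrt{2\pi L't}}\cdot c^{-3/2}\cdot\exp\left[-\frac{(\ln c)^2}{2L't}\right],
$$
so that the whole problem reduces to the growth rate of $\E\left(\abs{X_2(t)} \mid X_1(t)=1/c\right)$ as $c\to+\infty$.

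First I would condition on the entire path of $X_1$. Because $W_2$ is independent of $X_1$ (Lemma~\ref{lemma3.7}) and $X_2(t)=\sqrt{L''}\int_0^t X_1(s)\,dW_2(s)$, conditionally on $X_1$ the variable $X_2(t)$ is centered Gaussian with variance $L''\sigma_t^2$, where $\sigma_t^2:=\int_0^t X_1^2(s)\,ds$. Thus $\E(\abs{X_2(t)}\mid X_1)=\sqrt{2L''/\pi}\,\sigma_t$, and by the tower property
$$
\E\left(\abs{X_2(t)} \mid X_1(t)=1/c\right)=\sqrt{\frac{2L''}{\pi}}\cdot\E\left(\sigma_t \mid X_1(t)=1/c\right).
$$
To analyse the remaining conditional expectation I would realize the conditioning through the Brownian bridge. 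Writing $X_1(s)=\exp[-\tfrac12 L's+\sqrt{L'}W_1(s)]$ and conditioning on $W_1(t)=a$ with $a=\frac{1}{\sqrt{L'}}\left(\tfrac12 L't-\ln c\right)$, one may put $W_1(s)=\frac{s}{t}a+B_s$ for a standard Brownian bridge $B$ on $[0;t]$. A short computation reveals the clean cancellation
$$
X_1^2(s)=\exp\left[-\frac{2\ln c}{t}\,s+2\sqrt{L'}\,B_s\right],
$$
so that, setting $k:=\frac{2\ln c}{t}$ and substituting $s=r/k$,
$$
k\sigma_t^2=\int\limits_0^{kt}e^{-r}\,e^{2\sqrt{L'}\,B_{r/k}}\,dr.
$$

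As $c\to+\infty$ we have $k\to+\infty$ and $B_{r/k}\to B_0=0$ for each fixed $r$, while the integrand is dominated by $e^{-r}\exp[2\sqrt{L'}\sup_{s\in[0;t]}\abs{B_s}]$, which is integrable in $r$; hence $k\sigma_t^2\to 1$ almost surely. The main obstacle will be to transfer this almost-sure limit to the conditional expectation. Here I would use that $B_s$ is centered Gaussian with variance $\frac{s(t-s)}{t}\leqslant\frac{t}{4}$, whence $\E e^{2\sqrt{L'}B_s}\leqslant e^{L't/2}$ uniformly in $s$ and therefore $\E(k\sigma_t^2)\leqslant e^{L't/2}$; this bounds $\sqrt{k\sigma_t^2}$ in $L^2$, making it uniformly integrable, and the almost-sure limit passes to the mean, giving $\sqrt{k}\,\E(\sigma_t\mid X_1(t)=1/c)=\E\sqrt{k\sigma_t^2}\to 1$, i.e. $\E(\sigma_t\mid X_1(t)=1/c)=\sqrt{t/(2\ln c)}\,(1+\overline{o}(1))$. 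Substituting the three factors back, the powers combine as $c^2\cdot c^{-3/2}=\sqrt{c}$ and the constants as $\sqrt{2L''/\pi}\cdot\sqrt{t/(2\ln c)}\cdot(2\pi L't)^{-1/2}=\frac{\sqrt{L''}}{\pi\sqrt{2L'}}\,(\ln c)^{-1/2}$, which reproduces exactly the claimed asymptotics~\eqref{equation6}. I expect the only genuinely delicate point to be this uniform-integrability step legitimizing the Laplace-type limit under the conditional expectation; the algebraic cancellation and the final constant bookkeeping are routine once it is secured.
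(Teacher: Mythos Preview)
Your approach mirrors the paper's: both reduce to $\E(\sigma_t\mid X_1(t)=1/c)$ via the conditional Gaussianity of $X_2(t)$, realize the conditioning through a Brownian bridge, and extract the asymptotic via a Laplace-type limit justified by uniform integrability (your $L^2$ bound $\E(k\sigma_t^2)\leqslant e^{L't/2}$ is in fact slightly cleaner than the paper's route through $\max_{0\leqslant s\leqslant 1}B(s)$). However, your justification of the first reduction contains a false claim: it is \emph{not} true that $(p_t(0),p'_t(0))\stackrel{d}{=}(1/X_1(t),-X_2(t)/X_1^2(t))$. What the proof of Theorem~\ref{theorem3.9} actually establishes, via Theorem~\ref{theorem2.10}, is the tilted identity
\[
\Prob{p_t(0)\in\Delta_1,\;p'_t(0)\in\Delta_2}=\E\bigl[X_1(t)\cdot\1\{1/X_1(t)\in\Delta_1,\;-X_2(t)/X_1^2(t)\in\Delta_2\}\bigr],
\]
so the law of $(p_t(0),p'_t(0))$ is that of $(1/X_1(t),-X_2(t)/X_1^2(t))$ under the change of measure with density $X_1(t)$, not under $\mathbf{P}$. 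The marginals already show the discrepancy: $p_t(0)\stackrel{d}{=}X_1(t)$ (Remark after Theorem~\ref{theorem3.3}), not $1/X_1(t)$.

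Your conditional formula $\E(|p'_t(0)|\mid p_t(0)=c)=c^2\,\E(|X_2(t)|\mid X_1(t)=1/c)$ is nonetheless correct, but for a different reason: the Radon--Nikodym factor $X_1(t)$ depends only on the variable being conditioned on, so it cancels in the conditional law of $p'_t(0)$ given $p_t(0)=c$. Equivalently, from~\eqref{equation15} the extra $1/z_1$ in $\pi[p_t(0),p'_t(0)](z_1,z_2)=z_1^{-5}\pi[X_1(t),X_2(t)](1/z_1,z_2/z_1^2)$ (compared with the pure Jacobian $z_1^{-4}$) is constant on $\{z_1=c\}$ and drops out of the ratio defining the conditional density. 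Once you replace the incorrect joint-law assertion by this observation (or simply start from~\eqref{equation15} as the paper does), the rest of your argument is sound and coincides with the paper's.
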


\begin{proof}
From equalities~\eqref{equation16} and~\eqref{equation15} we obtain
\begin{align*}
\overline{\mu}_t(c)&=2\int\limits_0^{+\infty} z \cdot \pi[p_t(0),p'_t(0)](c,z)\,dz=\\
&=\dfrac{2}{c^5} \cdot \int\limits_0^{+\infty} z \cdot \pi[X_1(t),X_2(t)]\left(\frac{1}{c},\dfrac{z}{c^2}\right)\,dz=\\
&=\dfrac{2}{c} \cdot \int\limits_0^{+\infty} z \cdot \pi[X_1(t),X_2(t)]\left(\frac{1}{c},z\right)\,dz.
\end{align*}
Note that since
\begin{gather*}
\int\limits_0^{+\infty} z \cdot \pi[X_1(t),X_2(t)]\left(\frac{1}{c},z\right)\,dz= \pi[X_1(t)]\left(\frac{1}{c}\right) \cdot \E\left((X_2(t))_+\left|X_1(t)=\dfrac{1}{c}\right.\right),
\end{gather*}
where
$$
(z)_+:=
\begin{cases}
z,\quad z\geqslant 0,\\
0,\quad z<0,\\
\end{cases}
$$
we have
\begin{gather*}
\overline{\mu}_t(c)=\dfrac{2}{c} \cdot \pi[X_1(t)] \left(\frac{1}{c}\right) \cdot \E\left((X_2(t))_+\left|X_1(t)=\dfrac{1}{c}\right.\right)=\\
=\dfrac{2}{c} \cdot \pi[X_1(t)] \left(\frac{1}{c}\right) \cdot \E\left(\left.\E\left[\left.\left(\sqrt{L''}\int\limits_0^t X_1(s)\,dW_2(s)\right)_+\right|X_1\right]\right|X_1(t)=\dfrac{1}{c}\right).
\end{gather*}
Furthermore, in a standard way by using the approximation of the stochastic integral with partial sums, the independence of the stochastic process $X_1$ from the Wiener process $W_2$, and the fact that if $\xi\sim\mcN(0;\sigma^2)$, then
$$
\E(\xi)_+=\dfrac{\sigma}{\sqrt{2\pi}},
$$
one can show that
$$
\E\left(\left.\left(\sqrt{L''}\int\limits_0^t X_1(s)\,dW_2(s)\right)_+\right|X_1\right)=\dfrac{\sqrt{L''}}{\sqrt{2\pi}} \cdot \left(\int\limits_0^t X_1^2(s)\,ds\right)^{1/2}.
$$
Therefore,
$$
\overline{\mu}_t(c)=\dfrac{2}{c} \cdot \pi[X_1(t)] \left(\frac{1}{c}\right) \cdot \dfrac{\sqrt{L''}}{\sqrt{2\pi}} \cdot \E\left(\left.\left(\int\limits_0^t X_1^2(s)\,ds\right)^{1/2}\right|X_1(t)=\dfrac{1}{c}\right).
$$
However,
\begin{gather*}
\E\left(\left.\left(\int\limits_0^t X_1^2(s)\,ds\right)^{1/2}\right|X_1(t)= \dfrac{1}{c}\right)=\\
=\E\left(\left.\left(\int\limits_0^t \exp\left[-L's+2\sqrt{L'}\left(\frac{s}{t}W_1(t)+\widetilde{B}(s)\right)\right]\,ds\right)^{1/2}\right|W_1(t)=\dfrac{1}{\sqrt{L'}}\left(\dfrac{1}{2}L't-\ln c\right)\right),
\end{gather*}
where
$$
\widetilde{B}(s):=W_1(s)-\frac{s}{t}W_1(t),\quad 0\leqslant s\leqslant t.
$$
Since the stochastic process $\{\widetilde{B}(s),\; 0\leqslant s\leqslant t\}$ does not depend on the random variable $W_1(t)$, we obtain
\begin{gather*}
\E\left(\left.\left(\int\limits_0^t X_1^2(s)\,ds\right)^{1/2}\right|X_1(t)= \dfrac{1}{c}\right)=\\
=\E\left(\int\limits_0^t \exp\left[-L's+2\sqrt{L'} \cdot \frac{s}{t} \cdot \frac{1}{\sqrt{L'}} \cdot \left(\dfrac{1}{2}L't-\ln c\right)+ 2\sqrt{L'}\widetilde{B}(s)\right]\,ds\right)^{1/2}=\\
=\E\left(\int\limits_0^t \exp\left[-2\ln c \cdot \frac{s}{t}+ 2\sqrt{L'}\widetilde{B}(s)\right]\,ds\right)^{1/2}.
\end{gather*}
Set
$$
B(s):=\dfrac{1}{\sqrt{t}}\widetilde{B}(st),\quad 0\leqslant s\leqslant 1.
$$
Then $\{B(s),\; 0\leqslant s\leqslant 1\}$ is a standard Brownian bridge and
\begin{gather*}
\E\left(\int\limits_0^t \exp\left[-2\ln c \cdot \frac{s}{t}+2\sqrt{L'}\widetilde{B}(s)\right]\,ds\right)^{1/2}=\\
=\E\left(\int\limits_0^t \exp\left[-2\ln c \cdot \frac{s}{t}+2\sqrt{L'} \cdot \sqrt{t} B\left(\frac{s}{t}\right)\right]\,ds\right)^{1/2}=\\
=\sqrt{t} \cdot \E\left(\int\limits_0^1 \exp\left[-2\ln c \cdot s+2\sqrt{L't} \cdot B(s)\right]\,ds\right)^{1/2}.
\end{gather*}

Now define functions $h_c\colon [0;1]\rightarrow\mbR$, $c>1$, by the equality
$$
h_c(s):=\dfrac{2c^2\ln c}{c^2-1} \cdot e^{-2\ln c \cdot s},\quad 0\leqslant s\leqslant 1,\quad c>1,
$$
and note that they satisfy the following conditions:
\begin{enumerate}
\item[1)]
$h_c(s)\geqslant 0$, $s\in [0;1]$, $c>1$;

\item[2)]
$\int\limits_0^1 h_c(s)\,ds=1$, $c>1$;

\item[3)]
$\forall\, \ve\in (0;1):\quad \lim\limits_{c\to +\infty} \int\limits_\ve^1 h_c(s)\,ds=0$.
\end{enumerate}
Therefore,
$$
\lim_{c\to +\infty} \int\limits_0^1 h_c(s)e^{2\sqrt{L't} \cdot B(s)}\,ds=e^{2\sqrt{L't} \cdot B(0)}=1.
$$
Since for any $n\geqslant 1$ we have
\begin{gather*}
\E\sup_{c>1}\left(\int\limits_0^1 h_c(s)e^{2\sqrt{L't} \cdot B(s)}\,ds\right)^n\leqslant \E\sup_{c>1}\left(\int\limits_0^1 h_c(s)\,ds \cdot e^{2\sqrt{L't} \cdot \max\limits_{0\leqslant s\leqslant 1} B(s)}\right)^n=\\
=\E\sup_{c>1} e^{2n\sqrt{L't} \cdot \max\limits_{0\leqslant s\leqslant 1} B(s)}=\E e^{2n\sqrt{L't} \cdot \max\limits_{0\leqslant s\leqslant 1} B(s)}<+\infty,
\end{gather*}
the family of random variables
$$
\left(\int\limits_0^1 h_c(s)e^{2\sqrt{L't} \cdot B(s)}\,ds\right)^{1/2},\quad c>1,
$$
is uniformly integrable, and so
$$
\lim_{c\to +\infty} \E\left(\int\limits_0^1 h_c(s)e^{2\sqrt{L't} \cdot B(s)}\,ds\right)^{1/2}=\E\lim_{c\to +\infty} \left(\int\limits_0^1 h_c(s)e^{2\sqrt{L't} \cdot B(s)}\,ds\right)^{1/2}=1.
$$
Therefore,
$$
\E\left(\int\limits_0^1 \exp\left[-2\ln c \cdot s+2\sqrt{L't} B(s)\right]\,ds\right)^{1/2}=\dfrac{1}{\sqrt{2\ln c}} \cdot (1+\overline{o}(1)),\quad c\to +\infty.
$$
Thus, taking Corollary~\ref{corollary3.2} into account, we finally obtain
\begin{gather*}
\overline{\mu}_t(c)=\dfrac{2}{c} \cdot \dfrac{e^{-\frac{L't}{8}}}{\sqrt{2\pi L't}} \cdot c^{3/2} \cdot \exp\left[-\frac{(\ln c)^2}{2L't}\right] \cdot \dfrac{\sqrt{L''}}{\sqrt{2\pi}} \cdot \sqrt{t} \cdot \dfrac{1}{\sqrt{2\ln c}} \cdot (1+\overline{o}(1))=\\
=\dfrac{e^{-\frac{L't}{8}}\sqrt{L''}}{\pi\sqrt{2L'}} \cdot \sqrt{\dfrac{c}{\ln c}} \cdot \exp\left[-\frac{(\ln c)^2}{2L't}\right] \cdot (1+\overline{o}(1)),\quad c\to +\infty.
\end{gather*}
The theorem is proved.
\end{proof}

\end{document}